 \theoremstyle{plain}
\newtheorem{thm}{Theorem}[section]
  \theoremstyle{plain}
 \theoremstyle{definition}
  \newtheorem{example}[thm]{Example}
  \theoremstyle{definition}
  \newtheorem{defn}[thm]{Definition}
  \theoremstyle{definition}
  \newtheorem{rk}[thm]{Remark}
  \theoremstyle{definition}
  \theoremstyle{plain}
  \theoremstyle{plain}
  \newtheorem{lem}[thm]{Lemma}
    \theoremstyle{plain}
\renewcommand{\epsilon}{\varepsilon}
\newcommand{\bE}{{\Bbb E}}
\newcommand{\F}{\mathcal{F}}
\renewcommand{\L}{\mathcal{L}}
\renewcommand{\O}{\mathcal{O}}
\renewcommand{\P}{\mathcal{P}}
\newcommand{\R}{\mathbb{R}}
\renewcommand{\S}{\mathbb{S}}
\newcommand{\Wi}{\mathbb{W}}
\renewcommand{\P}{{\Bbb P}}
\newcommand{\floor}[1]{{\lfloor #1 \rfloor}}
\begin{document}

\title[An Approx Scheme for Reflected Stochastic Differential Equations]{An Approximation Scheme for Reflected Stochastic Differential Equations}

\author{Lawrence Christopher Evans}
%    Address of record for the research reported here
\address{Department of Mathematics, Massachusetts Institute of Technology, Cambridge, Massachusetts 02139}
%    Current address
\curraddr{Department of Mathematics,
University of Missouri, Columbia, Missouri 65211}
\email{lcevans@math.mit.edu}

\author{Daniel W. Stroock}
%    Address of record for the research reported here
\address{Department of Mathematics, Massachusetts Institute of Technology, Cambridge, Massachusetts 02139}
%    Current address
%\curraddr{Department of Mathematics,
%University of Missouri, Columbia, Missouri 65211}
\email{dws@math.mit.edu}

\subjclass[2010]{Primary 60J50, 60F17. Secondary 60J55,  60J60}

\keywords{Wong-Zakai Approximation, Stratonovich Reflected Stochastic Differential Equation, Coupled Reflected Brownian Motion}

\date{\today{}}

\begin{abstract} In this paper we consider the Stratonovich reflected
  stochastic differential equation
  $dX_t=\sigma(X_t)\circ dW_t+b(X_t)dt+dL_t$ in a bounded domain $\O$ which
  satisfies conditions, introduced by Lions and
  Sznitman, which are specified below. Letting $W^N_t$ be the $N$-dyadic piecewise linear
  interpolation of $W_t$ what we show is that one can solve the reflected
  ordinary differential equation
  $\dot X^N_t=\sigma(X^N_t)\dot W^N_t+b(X^N_t)+\dot L^N_t$ and that the distribution
  of the pair $(X^N_t,L^N_t)$ converges weakly to that of $(X_t,L_t)$.  Hence, 
  what we prove is a distributional version for reflected diffusions of the
  famous result of Wong and Zakai.

Perhaps the most valuable contribution made by our procedure derives from
the representation of $\dot X^N_t$ in terms of a projection of $\dot
W_t^N$.  In particular, we apply our result in hand to derive some geometric properties
of coupled reflected Brownian motion in certain domains, especially
those properties which have been used in recent work on the ``hot spots''
conjecture for special domain.\end{abstract}

\maketitle

\thispagestyle{empty}

\setcounter{page}{1}

%%%%%FirstSection%%%%%%

\section{Introduction}

\subsection{Motivation}

As is well known, It\^o stochastic differential equations can be very
misleading from a geometric standpoint.  The classic example of this
observation is the It\^o stochastic differential equation (SDE)
$$dX(t)=\sigma \bigl(X(t)\bigr)dW_t\text{ with } X(0)=\begin{pmatrix}
1\\0\end{pmatrix}\text{ and } \sigma =\begin{pmatrix} -x_2\\x_1\end{pmatrix},$$
where $W_t$ is a $1$-dimensional Brownian motion.  If one makes the mistake
of thinking that It\^o differentials of Brownian motion behave like classical differentials,
then one would predict the $X(t)$ should live on the unit circle.  On the
other hand, It\^o's formula, which is a quantitative statement of the
extent to which they do not behave like classical differentials, says that
$d|X(t)|^2=|X(t)|^2dt$, and so $|X(t)|^2=e^t$.

To avoid the sort of misinterpretation to which It\^o SDE's lead, it is
convenient to replace It\^o SDE's by their Stratonvich counterparts.  When
one does so, then the Wong--Zakai theorem ~\cite{WZ} shows that the
solution to the SDE can be approximated by solutions to the ordinary
differential equation (ODE) which one obtains by piecewise linearizing the
Brownian paths.  In this way, one can transfer to solutions of the SDE
geometric properties which one knows for the solutions to the ODE's.  The
purpose of this paper is to carry out the analogous program for SDE's for 
diffusions which are reflected at the boundary of some region.  This is not
the first time that such a program has been attempted.  For example,
R. Petterson  proved in ~\cite{Peter} a result of this sort under the
assumption that the domain is convex.  Unfortunately, convexity is too
rigid a requirement for applications of the sort which appear in papers
like ~\cite{HotSpots} by Banuelos and Burdzy, and so it is important to
replace convexity by a more general condition, like the one given in
~\cite{LS} by A. Sznitman and P.L. Lions.  Finally, it should be mentioned
that the article ~\cite{Arturo} by A. Kohatsu-Higa contains a very general,
highly abstract approximation procedure which may be applicable to the
situation here.   

\subsection{Background for Reflected SDE's}

We begin by recalling the (deterministic) Skorohod problem.

Let $\O\subset \R^d$ be a domain and to each $x\in\partial\O$ assign a nonempty
collection $\nu(x)\subseteq\S^{d-1}$, to be thought of as the set of
directions in which a path can be ``pushed'' when it hits $x$.  Given a
continuous path
$w:[0,\infty)\rightarrow \R^d$ with $w_0\in\O$, known as the ``input,'' we say that a
\emph{solution to the Skorohod problem for} $(\O,\nu(x))$ is a pair
$(x_\cdot,\ell_\cdot)$ consisting of a continuous path
$t\in[0,\infty)\longmapsto x_t\in\bar{\O}$ and a continuous function of
locally bounded variation $t\in[0,\infty )\longmapsto \ell_t\in\R^d$ such that
      \begin{equation} x_t=w_t+\ell_t,\; |\ell|_t=\int_0^t
        1_{\partial\O}(x_s)d|\ell|_s,\text{ and
        }\ell_t=\int_0^t\nu(x_s)d|\ell|_s, \end{equation}
where $|\ell|_t$ denotes the total variation of $\ell_t$ on the interval $[0,t]$, and
the third line is a shorthand way of saying that
$$\frac{d\ell_t}{d|\ell|_t}\in\nu(x_t),\quad d|\ell|_t-\text{a.e.}.$$
When a unique solution exists for each input, we will call the map
$w_\cdot\rightsquigarrow (x_\cdot,\ell_\cdot)$ the \emph{Skorohod
map} and will denote it by $\Gamma $.  Also, the path $x_\cdot$ will be
referred to as the ``output.''

Throughout this paper we will take $\nu(x)$ to be the collection of inward pointing
proximal normal vectors
\begin{equation} \label{defnofnu} \nu(x)\equiv \big\{\nu\in
  \S^{d-1}:\exists C>0\;\forall x'\in\bar\O\;(x'-x)\cdot\nu+ C|x-x'|^2\geq 0\}.  \end{equation}
Elementary algebra shows that
\begin{equation} \label{elementaryalg} (x'-x)\cdot\nu+ C|x-x'|^2< 0
  \Longleftrightarrow \left|x'-(x-\frac{\nu}{2C})\right|^2<\left(\frac{1}{2C}\right)^2
\end{equation} which shows that, geometrically, $\nu(x)$ is the collection
of unit vectors based at $x\in\partial\O$ such that there exists an open
ball touching the base of $\nu$ but not intersecting $\O$.

The class of domains which we will consider was described by Lions and
Sznitman in~\cite{LS}.  Namely, we will say that $\O$ is \emph{admissible} if

\begin{defn}
\label{admissible}
\begin{enumerate}
\item $\forall x\in\partial\O$, $\nu(x)\neq\phi$, and there exists a $C_0\geq 0$
such that 
$$
(x'-x)\cdot\nu+C_0|x-x'|^2\geq 0\text{ for all }x'\in\O,\;x\in\partial
\O,\text{ and }\nu\in\nu(x).$$

\item There exists a function $\phi\in C^2(\R^d;\R)$ and $\alpha >0$ such that 
$$
\nabla\phi(x)\cdot\nu\geq \alpha>0\text{ for all }x\in\partial\O\text{ and }\nu\in\nu(x).$$

\item There exist $n\geq 1$, $\lambda > 0$, $R>0$,
  $a_1,\ldots,a_n\in\S^{d-1}$, and $x_1,\ldots,x_n\in\partial O$ such that  
$$\begin{gathered}
\partial O\subseteq\bigcup_{i=1}^nB(x_i,R)\text{ and }\\
x\in\partial O\cap B(x_i,2R)\Longrightarrow\nu\cdot a_i\geq \lambda >
0\text{ for all }\nu\in\nu(x).\end{gathered}$$\end{enumerate}\end{defn}

In view of (\ref{elementaryalg}), Part 1 of Definition \ref{admissible} can
be seen as a sort of uniform exterior ball condition. More precisely, it
says that not only can every point $x\in\partial\O$ can be touched by an
exterior ball but also that the exterior ball touching $x$ can be scaled to
have a uniformly large radius.
In the convex analysis literature, the closure of a set $\O$ satisfying Part 1 of
Definition \ref{admissible} is said to be \emph{uniformly prox-regular}
(See~\cite{PRT}, especially Theorem 4.1, for more on the properties of
uniformly prox-regular sets).

Parts 2 and 3 of Definition \ref{admissible} are regularity
requirements on $\partial\O$ which ensure that the ``normal vectors'' don't
fluctuate too wildly. In this connection, notice that Part 3 is implied by
Part 2 when $\O$ is bounded.

In their paper~\cite{LS}, Lions and Sznitman show that for each $w_\cdot\in
C\bigl([0,\infty );\R^d\bigr)$ there exists an almost surely 
unique solution $(x_\cdot,\ell_\cdot)$
to the deterministic Skorohod problem when the domain $\O$ is admissible.
The map $\Gamma $ which takes $w_\cdot$ to $x_\cdot$ is called the deterministic
Skorohod map.

We turn next to the formulation of reflected diffusions in terms of a
Skorohod problem for an SDE.  Until further notice, we will be looking at
It\^o SDE's and will only reformulate them as Stratonovich SDE's when it is
important to do so.

Let $\O\subset \R^d$ an admissible domain, and let $\sigma
:\bar\O\longrightarrow \rm{Hom}(\R^r;\R^d)$ and $b:\bar\O\longrightarrow
\R^d$ be uniformly Lipschitz continuous maps.  Given an $r$-dimensional
Brownian motion $W_\cdot$ and $x_0\in\O$, a solution to 
$(X_\cdot,L_\cdot)$ to the reflected SDE (\ref{reflectedSDE}) is a continuous
process $\bigl\{(X_t,L_t):\,t\ge0\bigr\}$ which is progressively measurable
with respect to $W_\cdot$ and satisfies the conditions that $(X_t,L_t)\in
\bar\O\times \R^d$ and $|L|_t<\infty $ for all $t\ge0$, and, almost surely,
    \begin{equation} \label{reflectedSDE} \begin{split}
        X_t=&x_0+\int_0^t\sigma(X_s)dW_s+\int_0^t
        b(X_s)ds+L_t,\\ |L|_t=&\int_0^t 1_{\partial\O}(X_s)d|L|_s,\text{
          and }L_t=\int_0^t\nu(X_s)d|L|_s, \end{split} \end{equation} where
    $|L|_t$ denotes the total variation of $L_t$ by time $t$, and the third
    line is shorthand for $\frac{dL_t}{d|L|_t}\in\nu(x_t),\;d|L|_t-\text{a.e.}$.  

Existence and uniqueness of solution to reflected SDE's was proved by
H. Tanaka in ~\cite{Tanaka} when $\O$ is convex.  The extension of his
result to admissible domains was made by Lions and Sznitman in ~\cite{LS}
and Saisho in ~\cite{Saisho}. We refer the reader to those papers for an
overview of the subject. 

%%%%%%%%%%%%%%%%%%%%%%%NEWSECTION%%%%%%%%%%%%%%%%%%%%%%%%%%%%%%%

\section{\label{2}  Equations with Reflection}

\subsection{Properties of Solutions to Reflected ODE's}

Suppose that $\O$ is a bounded, admissible domain and that $\sigma :\bar\O\longrightarrow
\rm{Hom}(\R^r;\R^d)$ is uniformly Lipschitz continuous.  In this section we
will show that, for each $x_0\in\bar{\O}$ and $w_\cdot\in C\bigl([0,\infty
  );\R^d\bigr)$ there is precisely one solution $(x_\cdot,\ell_\cdot)$ to
the reflected ODE
\begin{equation}
\label{reflectedODE}
\begin{split}
x_t=&x_0+\int_0^t\sigma(x_s)dw_s+\ell_t,\\
|\ell|_t=&\int_0^t 1_{\partial\O}(x_s)d|\ell|_s,\text{ and }\ell_t=\int_0^t\nu(x_s)d|\ell|_s,
\end{split}
\end{equation}
where $x_\cdot\in C\bigl([0,\infty );\bar \O\bigr)$ and $\ell_t:[0,\infty
    )\longrightarrow \R^d$ is a continuous function having finite variation
    $|\ell|_t$ on $[0,t]$ for all $t>0$.  In addition, we will give
    a geometrically appealing alternate description of this solution.
    Previously,  existence and uniqueness results for variants of
    (\ref{reflectedODE}) are well known in the convex analysis literature.
    For example, see~\cite{Bettiol} for a recent such result as well as a
    good overview of other known results. 

Although the proofs of existence and uniqueness are implicit in the
contents of other articles, we, mimicking the proof of Theorem 3.1 in
\cite{LS}, will prove them here.  For this purpose, consider the map
$F_w:C\bigl([0,\infty );\bar O\bigr)\longrightarrow C\bigl([0,\infty );\bar O\bigr)$
given by $F_w(y_\cdot)=\Gamma(x_0+\int_0^\cdot
{\sigma(y_s)dw_s})$, where $\Gamma$ is the Skorohod map.  We will show that
$F$ has a unique fixed point, and the key to doing so is contained in the next lemma.

\begin{lem}
\label{contraclem}  For each $T>0$ there exists a $C=C_w(T)<\infty $
such that for any pair of paths
$y_\cdot$ and $y'_\cdot$,
\begin{eqnarray}
\label{contractioninequality}
|F_w(y_\cdot)_t-F(y'_\cdot)_t|^2\leq
\int_0^te^{C(t-\tau )}|y_\tau -y'_\tau |^2d\tau \quad\text{for all }t\in[0,T]. 
\end{eqnarray}
\end{lem}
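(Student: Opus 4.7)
The plan is to unpack the definitions, apply Ito--Stratonovich calculus to $|x_t-x'_t|^2$ where $x=F_w(y)$ and $x'=F_w(y')$, and then use the admissibility conditions to absorb the reflection contributions so that Gr\"onwall's inequality can be invoked. Set
$$u_t=x_0+\int_0^t\sigma(y_s)\,dw_s,\qquad u'_t=x_0+\int_0^t\sigma(y'_s)\,dw_s,$$
and let $(x_\cdot,\ell_\cdot)=\Gamma(u_\cdot)$ and $(x'_\cdot,\ell'_\cdot)=\Gamma(u'_\cdot)$, so that $x_t=u_t+\ell_t$, $x'_t=u'_t+\ell'_t$, and the reflection terms are supported on $\{x_t\in\partial\O\}$, $\{x'_t\in\partial\O\}$ respectively, with directions in $\nu(x_t)$, $\nu(x'_t)$.

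I would start by writing the (Stieltjes) differential
$$d|x_t-x'_t|^2=2(x_t-x'_t)\cdot(\sigma(y_t)-\sigma(y'_t))\,dw_t+2(x_t-x'_t)\cdot d\ell_t-2(x_t-x'_t)\cdot d\ell'_t.$$
For the last two terms I would apply Part 1 of Definition \ref{admissible}: writing $d\ell_t=\nu_t\,d|\ell|_t$ with $\nu_t\in\nu(x_t)$ and using $x'_t\in\bar\O$ yields $(x_t-x'_t)\cdot\nu_t\leq C_0|x_t-x'_t|^2$, and symmetrically for $d\ell'_t$. Hence these contributions are bounded by $2C_0|x_t-x'_t|^2(d|\ell|_t+d|\ell'|_t)$. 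For the drift term, Cauchy--Schwarz together with the Lipschitz constant $K$ of $\sigma$ gives a bound by $|x_t-x'_t|^2\,d|w|_t+K^2|y_t-y'_t|^2\,d|w|_t$.

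Collecting these I obtain a Gronwall-type inequality
$$d|x_t-x'_t|^2\leq |x_t-x'_t|^2\,d\mu_t+K^2|y_t-y'_t|^2\,d|w|_t,$$
where $d\mu_t=d|w|_t+2C_0\bigl(d|\ell|_t+d|\ell'|_t\bigr)$. To convert this into the stated bound I would absorb $\mu_t$ into the exponential constant $C=C_w(T)$ by showing that $\mu_T$ is finite and that both $d|w|_t$ and $d\mu_t$ are dominated by $M\,dt$ for some $M=M_w(T)$. The dominance of $d|w|_t$ is automatic in the regime we care about (piecewise linear $w$, which is the case used in the Wong--Zakai application). The control of $|\ell|_T+|\ell'|_T$ in terms of $w$ and $\O$ only is the classical a priori estimate of Lions--Sznitman obtained by applying the test function $\phi$ from Part 2 of Definition \ref{admissible}: integrating $d\phi(x_t)=\nabla\phi(x_t)\cdot du_t+\nabla\phi(x_t)\cdot d\ell_t$ and using $\nabla\phi\cdot\nu\geq\alpha$ yields $\alpha|\ell|_t\leq 2\|\phi\|_{u,\bar\O}+\|\nabla\phi\|_{u,\bar\O}|w|_t\sup_{\bar\O}|\sigma|$, and similarly for $|\ell'|_t$, with the right-hand side independent of $y,y'$.

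Once these ingredients are assembled, the standard Gr\"onwall argument in differential form produces
$$|x_t-x'_t|^2\leq\int_0^t e^{C(t-\tau)}|y_\tau-y'_\tau|^2\,d\tau,$$
with $C$ depending only on $T$, $w$, $\O$, $K$, and $\|\sigma\|_\infty$. The main obstacle is the third step: establishing the a priori bound on $|\ell|_T+|\ell'|_T$ uniformly over the paths $y,y'$, since the reflection terms themselves depend implicitly on $y$ and $y'$ through the fixed point map. This is the one place where both Parts 1 and 2 of admissibility are genuinely used, and the Lions--Sznitman test function argument is precisely what is designed to handle it.
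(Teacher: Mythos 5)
Your proof is correct but follows a genuinely different route from the paper's.  The paper handles the reflection terms by introducing the weighted quantity $e^{\gamma[\phi(z_t)+\phi(z'_t)]}|z_t-z'_t|^2$ with $\gamma=-2C_0/\alpha$: when you differentiate this, the coefficients of $d|\ell|_t$ and $d|\ell'|_t$ become
$\bigl(2(z_t-z'_t)\cdot\nu(z_t)+\gamma|z_t-z'_t|^2\nabla\phi(z_t)\cdot\nu(z_t)\bigr)$ and its mirror, and Parts 1 and 2 of admissibility show these are $\le (2C_0+\gamma\alpha)|z_t-z'_t|^2=0$.  The reflection terms then drop out entirely, and Gr\"onwall with respect to $dt$ is immediate (the exponential weight only contributes a bounded factor since $\phi$ is bounded on $\bar\O$).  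You instead leave the squared distance unweighted, bound the reflection contributions by $2C_0|x_t-x'_t|^2\,(d|\ell|_t+d|\ell'|_t)$, and run Gr\"onwall against the measure $d\mu_t=d|w|_t+2C_0(d|\ell|_t+d|\ell'|_t)$; this is sound, but it pushes the $d|\ell|_t$'s to the ``bad'' side of the inequality, so you must supply the Lions--Sznitman a priori variation estimate (via the same test function $\phi$) to control $\mu$ before converting back to a $dt$-Gr\"onwall.  Both arguments ultimately rest on Part 1 to control the reflection and Part 2 to control its size, but the paper's weighting kills the reflection terms for free, making the a priori bound on $|\ell|,|\ell'|$ unnecessary at this stage and keeping the estimate self-contained.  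One small caution in your write-up: the $\phi$-integration gives only a global bound $\mu_T<\infty$, not directly a pointwise bound $d\mu_t\le M\,dt$; to get the latter you should instead invoke the Lions--Sznitman estimate $d|\ell|_t\le d|u|_t\le C\,d|w|_t$ (the same one the paper uses later in Lemma~\ref{variationinequality}), and then use the piecewise smoothness of $w$.  With that replacement, your argument closes, at the cost of a multiplicative constant in front of the integral, which is of course harmless for the Picard iteration that follows.
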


\begin{proof}  Set $z_\cdot=F(y_\cdot)$ and $z'_\cdot=F(y'_\cdot)$.  Given
  $T>0$, we will
  show that there is a $C<\infty $ such that
$$|z_t-z'_t|^2\leq C\left(\int_0^t|z_\tau -z'_\tau |^2\,d\tau
  +\int_0^t|y_\tau -y'_\tau |^2\,d\tau\right),\quad t\in[0,T].$$
Once this is proved, the required estimate follows immediately from
Gromwall's inequality.

Let $\phi$ be the function associated with $\O$ (see part 2 of Definition
\ref{admissible}).  For any constant $\gamma$, we have that \begin{align*}
  e&^{-\gamma[\phi(z_t)+\phi(z'_t)]}d\bigl(e^{\gamma[\phi(z_t)+\phi(z'_t)]}|z_t-z'_t|^2\bigr)
\\ =&2(z_t-z'_t)\cdot
\bigl[\bigl(\sigma(y_t)dw_t+d\ell_t\bigr)-\bigl(\sigma(y'_t)dw_t
+d\ell'_t\bigr)\bigr]\\ &+|z_t-z'_t|^2\gamma\bigl[\nabla\phi(z_t)
\cdot\bigl(\sigma(y_t)dw_t+d\ell_t\bigr)
+\nabla\phi(z'_t)\cdot\bigl(\sigma(y'_t)dw_t+d\ell'_t\bigr)\bigr]
\\ =&\bigl[(2(z_t-z'_t)+\gamma|z_t-z'_t|^2\nabla\phi(z_t))\cdot
\nu(z_t)\bigr]d|\ell|_t\\ &+\bigl[(2(z'_t-z_t)+\gamma|z_t-z'_t|^2\nabla\phi(z'_t))\cdot
\nu(z'_t)\bigr]d|\ell'|_t\\ &+\bigl[2(z_t-z'_t)\cdot\bigl(\sigma(y_t)-\sigma(y'_t)\bigr)
+\gamma|z_t-z'_t|^2\bigl(\nabla\phi(z_t)\sigma(y_t)+\nabla\phi(z'_t)\sigma(y'_t)\bigr)\bigr]dw_t
\end{align*}
Taking $\gamma=\frac{-2C_0}{\alpha}$, we have that (cf. Part 1
of Definition \ref{admissible}) the first two terms are less than or equal
to $0$.  Since $\sigma$ and $\nabla\phi$ are Lipschitz continuous and
$\frac{dw}{dt}$ is bounded on finite intervals, we know that there exists a
$C=C_w(T)<\infty $ such that
$$|z_t-z'_t|^2\leq
C\left(\int_0^t|z_\tau -z'_\tau |^2d\tau +\int_0^t|z_\tau -z'_\tau ||y_\tau -x'_\tau |d\tau
+\int_0^t|y_\tau -y'_\tau |^2d\tau \right)$$
for $t\in[0,R]$.  Thus, because
$|z_\tau -z'_\tau ||y_\tau -y'_\tau |\leq
\frac{1}{2}|z_\tau -z'_\tau |^2+\frac{1}{2}|y_\tau -y'_\tau |^2$, we get
our estimate after replacing $C$ by $2C$. \end{proof}

Once we have Lemma
\ref{contraclem}, one can apply a standard Picard iteration argument to
show that $F_w$ has a unique fixed point and that this fixed point is the
first component of the one and only pair $(x_t,\ell_t)$ which solves
(\ref{reflectedODE}).

We now want to describe a couple of important properties of the solution
$(x_\cdot,\ell_\cdot)$.  

\begin{lem} \label{variationinequality}  Let $(x_\cdot,\ell_\cdot)$ be the
  solution to (\ref{reflectedODE}) for a given input $w_\cdot$ and starting
  point $x_0\in\bar{\O}$.  
  Then there exists a constant $C$,
  depending only on $\sigma$, $b$, and $\O$, such that $$ d|x|_t\leq Cd|w|_t
  $$ \end{lem}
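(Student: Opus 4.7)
The plan is to deduce the measure inequality $d|x|_t\le C\,d|w|_t$ from a sup-norm Lipschitz bound for the deterministic Skorohod map $\Gamma$, followed by a subdivision argument. First I would invoke, or, what amounts to the same thing, re-derive by the $\phi$-trick of Lemma~\ref{contraclem}, the Lions--Sznitman Lipschitz estimate for $\Gamma$: for any two continuous inputs $w,\,w'$ sharing a common initial point in $\bar{\O}$ and any $T>0$,
$$
\sup_{0\le s\le T}|\Gamma(w)_s-\Gamma(w')_s|\ \le\ K\sup_{0\le s\le T}|w_s-w'_s|,
$$
with $K$ depending only on $\O$.  Its derivation is a near-copy of Lemma~\ref{contraclem}: set $z=\Gamma(w)$, $z'=\Gamma(w')$, expand $d\bigl(e^{-\gamma[\phi(z)+\phi(z')]}|z-z'|^2\bigr)$ with $\gamma=-2C_0/\alpha$ so that Parts~1 and~2 of Definition~\ref{admissible} force both reflection contributions to be nonpositive, and close by Gronwall.

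Next, fix $0\le s\le t$ and an arbitrary partition $s=t_0<t_1<\cdots<t_n=t$.  By uniqueness together with the obvious restart property of the Skorohod problem, the restriction of $x_\cdot$ to $[t_k,t_{k+1}]$ is precisely $\Gamma$ applied to the input $u\mapsto x_{t_k}+\int_{t_k}^u\sigma(x_v)\,dw_v$.  Comparing this input to the constant input $u\mapsto x_{t_k}$ (whose Skorohod image is the constant path $x_{t_k}$), the Lipschitz bound above gives
$$
|x_{t_{k+1}}-x_{t_k}|\ \le\ K\sup_{t_k\le u\le t_{k+1}}\Bigl|\int_{t_k}^u\sigma(x_v)\,dw_v\Bigr|\ \le\ K\,\|\sigma\|_\infty\,\bigl(|w|_{t_{k+1}}-|w|_{t_k}\bigr).
$$

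Summing over $k$ and then taking the supremum over all partitions of $[s,t]$ produces $|x|_t-|x|_s\le K\|\sigma\|_\infty\bigl(|w|_t-|w|_s\bigr)$ for every $0\le s\le t$, which is exactly the claimed $d|x|_t\le C\,d|w|_t$ with $C=K\|\sigma\|_\infty$.  The main obstacle is the sup-norm Lipschitz estimate on $\Gamma$ in the first step; however, this is essentially the same energy calculation that already drives Lemma~\ref{contraclem}---now applied to $\Gamma$ itself rather than to its composition with $\sigma$---so no new ideas are required beyond a careful bookkeeping of the subdivision step.
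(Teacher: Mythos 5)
The weak link in your argument is the sup-norm Lipschitz estimate $\sup_{s\le T}|\Gamma(w)_s-\Gamma(w')_s|\le K\sup_{s\le T}|w_s-w'_s|$ with $K$ depending only on $\O$, which you assert follows from the $\phi$-trick of Lemma~\ref{contraclem}.  It does not.  When you expand $d\bigl(e^{\gamma[\phi(z)+\phi(z')]}|z-z'|^2\bigr)$, the derivative of the exponential weight produces a term $\gamma\,e^{\gamma[\phi(z)+\phi(z')]}|z-z'|^2\bigl[\nabla\phi(z)\cdot dz+\nabla\phi(z')\cdot dz'\bigr]$; the pieces $\nabla\phi(z)\cdot d\ell$ and $\nabla\phi(z')\cdot d\ell'$ cancel the reflection terms, but the remaining pieces $\nabla\phi(z)\cdot dw$ and $\nabla\phi(z')\cdot dw'$ are of size $|z-z'|^2\,\bigl(d|w|+d|w'|\bigr)$, not $|z-z'|^2\,d|w-w'|$.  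Gronwall therefore delivers a constant of order $e^{C(|w|_T+|w'|_T)}$, and indeed the paper's own Lemma~\ref{contraclem} is stated with a constant written explicitly as $C_w(T)$, i.e.\ depending on the input.  A sup-norm Lipschitz bound for the Skorohod map with a constant depending only on $\O$ is simply not what that calculation produces, and I am not aware of any such bound in the Lions--Sznitman framework for admissible domains.

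Once you have that estimate, your restart-plus-partition bookkeeping in steps 2--4 is fine, and in fact the surrounding idea could be repaired: the energy argument does give $|x_{t_{k+1}}-x_{t_k}|\le K_\delta\|\sigma\|_\infty\bigl(|w|_{t_{k+1}}-|w|_{t_k}\bigr)$ with $K_\delta$ bounded uniformly as the variation $\delta$ of the input on $[t_k,t_{k+1}]$ shrinks, and that local version suffices after refining the partition.  But the paper's proof bypasses all of this: it cites Theorem~2.2 of Lions--Sznitman, which says directly that for a single BV input $y$ with Skorohod decomposition $x=y+\ell$ one has $d|\ell|_t\le d|y|_t$.  Setting $y_t=x_0+\int_0^t\sigma(x_s)\,dw_s$ gives $d|y|_t\le\|\sigma\|_\infty\,d|w|_t$ and then $d|x|_t\le d|y|_t+d|\ell|_t\le 2\|\sigma\|_\infty\,d|w|_t$ in two lines.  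That estimate concerns one input and its own reflector, not a comparison of two inputs, and is what makes the paper's proof both shorter and free of the issue above.
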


\begin{proof}
Set $y_t=x_0+\int_0^t\sigma(x_s)dw_s$.  Then, $x_\cdot=\Gamma(y_\cdot)$,
and so it follows from Theorem 2.2 in~\cite{LS} that
$d|\ell|_t\leq d|y|_t$.  
Since $\sigma$ is bounded on $\bar{\O}$, there exists a $C<\infty $ such that
$d|y|_t\leq Cd|w|_t$, and therefore, because $x_t=y_t+\ell_t$, we have that
$d|x|_t\leq d|y|_t+d|\ell|_t \leq C\bigl(d|w|_t+d|w|_t\bigr)$, 
from which the lemma follows immediately. \end{proof}

We now introduce a more geometric representation of the equation 
(\ref{reflectedODE}). For a closed set $D\subseteq\R^d$ and $z\in\R^d$, let
$d_{D}(z)\equiv \inf_{y\in D}|y-z|$ denote the distance from $z$ to $D$
and denote by
$$ T_{D}(z)\equiv \{v\in\R^d:\liminf_{h\searrow
  0}\frac{d_{D}(z+hv)}{h}=0\} $$
the \emph{tangent cone} (a.k.a. the contingent
cone) to $D$ at $z$. Finally, let $\text{{\rm proj}}_D(z)$ denote the (possibly multi-valued)
projection of $z$ onto $D$.

The following is a version of a representation
result which was introduced originally in~\cite{Cornet}.

\begin{thm}
\label{RepresentationTheorem}
Let $\O$ be a bounded, admissible set and $w_\cdot$ a fixed, piecewise
smooth input. If
$(x_\cdot,\ell_\cdot)$ is the unique solution to (\ref{reflectedODE}), then 
\begin{equation}
\label{geometricrep}
\dot{x}_t=\text{{\rm proj}}_{T_{\bar{\O}}(x_t)}(\sigma(x_t)\dot{w}_t),\quad t\text{-a.e.}
\end{equation}
Conversely, given a solution $x_\cdot$ to (\ref{geometricrep}), there exists an
$\ell_\cdot$ such that $(x_\cdot,\ell_\cdot)$ is a solution to (\ref{reflectedODE}). 
\end{thm}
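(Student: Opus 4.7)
The plan is to verify both implications pointwise for a.e.\ $t$, using the variational characterization of Euclidean projection onto a closed convex cone together with the uniform proximal normal inequality from Part 1 of Definition \ref{admissible}.

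For the forward implication, piecewise smoothness of $w_\cdot$ and Lemma \ref{variationinequality} force $x_\cdot$ to be Lipschitz on any finite interval, so $\dot x_t$ and $\dot\ell_t=\dot x_t-\sigma(x_t)\dot w_t$ exist for a.e.\ $t$. On the open set $\{t:x_t\in\O\}$ the support condition on $|\ell|_\cdot$ gives $\dot\ell_t=0$, and since $T_{\bar{\O}}(x_t)=\R^d$ the projection identity is trivial. The substantive case is $x_t\in\partial\O$. Writing $p=\dot x_t$, $v=\sigma(x_t)\dot w_t$, and $T=T_{\bar{\O}}(x_t)$, I invoke the standard characterization that $p=\text{{\rm proj}}_T(v)$ iff (a) $p\in T$, (b) $v-p\in T^\circ$, and (c) $(v-p)\cdot p=0$; by prox-regularity, $T^\circ$ is the outward normal cone $-\text{cone}(\nu(x_t))$.

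Condition (a) follows from $x_{t+h}\in\bar{\O}$ and $x_{t+h}=x_t+h\dot x_t+o(h)$, giving $d_{\bar{\O}}(x_t+h\dot x_t)=o(h)$. For (b), Lipschitz continuity of $\ell_\cdot$ gives $d|\ell|_t=|\dot\ell_t|\,dt$ a.e., so the shorthand $\ell_\cdot=\int_0^\cdot\nu(x_s)\,d|\ell|_s$ yields $\dot\ell_t=|\dot\ell_t|\,n_t$ with $n_t\in\nu(x_t)$, hence $-\dot\ell_t\in-\text{cone}(\nu(x_t))=T^\circ$. The main obstacle is (c). I would treat it via the bidirectional proximal normal inequality: for each $\nu\in\nu(x_t)$ and all $s$,
$$(x_s-x_t)\cdot\nu+C_0|x_s-x_t|^2\geq0,$$
which extends by continuity from $x_s\in\O$ to $x_s\in\bar{\O}$. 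Dividing by $s-t>0$ and letting $s\searrow t$ gives $\dot x_t\cdot\nu\geq 0$; dividing by $s-t<0$ (so the inequality reverses) and letting $s\nearrow t$ gives $\dot x_t\cdot\nu\leq 0$. Therefore $\dot x_t\perp\nu$ for every $\nu\in\nu(x_t)$, and in particular $\dot x_t\cdot\dot\ell_t=|\dot\ell_t|\,\dot x_t\cdot n_t=0$, verifying (c).

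For the converse, given $x_\cdot$ satisfying (\ref{geometricrep}) I set $\ell_t=x_t-x_0-\int_0^t\sigma(x_s)\,dw_s$. Nonexpansiveness of the Euclidean projection gives $|\dot x_t|\leq|\sigma(x_t)\dot w_t|$, so $x_\cdot$ and $\ell_\cdot$ are Lipschitz. The projection identity produces $\dot\ell_t=0$ whenever $x_t\in\O$, so $d|\ell|_t$ is supported on $\{x_t\in\partial\O\}$; when $x_t\in\partial\O$ the polar condition $v-p\in T^\circ$ together with unit-vector normalization identifies $\dot\ell_t/|\dot\ell_t|$ with an element of $\nu(x_t)$, yielding the required $d\ell_t/d|\ell|_t\in\nu(x_t)$ and completing the verification of (\ref{reflectedODE}).
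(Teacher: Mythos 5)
Your argument is correct and shares the same skeleton as the paper's: identify $\dot x_t - \sigma(x_t)\dot w_t = \dot\ell_t$ a.e., place $\dot x_t \in T_{\bar\O}(x_t)$ and $-\dot\ell_t \in N^p_{\bar\O}(x_t) = T_{\bar\O}(x_t)^*$ via Lemma \ref{convexfacts}, and then close with a projection characterization onto a convex cone. Where you genuinely diverge is in establishing the complementarity $\dot x_t\cdot\dot\ell_t = 0$. The paper deduces it abstractly: from $x_{t\pm h}\in\bar\O$ it gets $\dot x_t\in T_{\bar\O}(x_t)\cap(-T_{\bar\O}(x_t))$, and then pairs $\dot x_t$ and $-\dot x_t$ against $-\dot\ell_t\in T^*$ to force both $\le 0$ and $\ge 0$. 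You instead go directly to the uniform prox-normal inequality of Part 1 of Definition \ref{admissible}, take two-sided difference quotients in $s$ around $t$, and conclude $\dot x_t\cdot\nu = 0$ for every $\nu\in\nu(x_t)$. Your route is more concrete and bypasses the need to invoke $T^* = N^p$ a second time, at the small cost of requiring the Lipschitz bound $|x_s - x_t| = O(|s - t|)$ to kill the quadratic term; the paper's route is terser once the polar-cone machinery from Lemma \ref{convexfacts} is in hand. A second cosmetic difference: you use the Moreau-cone characterization ($p\in T$, $v-p\in T^*$, $(v-p)\cdot p = 0$), whereas the paper works with the variational inequality $(v-p)\cdot(q-p)\le 0$ for $q\in T$; these are equivalent for closed convex cones. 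Your treatment of the converse direction matches the paper's, with the welcome extra remark that $\dot\ell_t = 0$ on $\{x_t\in\O\}$ makes the support condition on $d|\ell|$ explicit.
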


\begin{rk} In general, the tangent cone $T_D(z)$ is only closed and not
  necessarily convex. However, Part 1 of Definition \ref{admissible}
  guarantees that $T_{\bar{\O}}(z)$ is convex for all $z\in\bar{\O}$
  (cf. Lemma \ref{convexfacts} below) and so $\text{{\rm proj}}_K(\cdot)$ is
  single valued.  \end{rk}

In order to prove Theorem \ref{RepresentationTheorem}, we will need to
introduce some concepts from convex analysis.  For more information
about these concepts and their properties, we refer the reader to the texts~\cite{RW}
and~\cite{Vinter}.

A non-empty set $K\subseteq\R^d$ is called a \emph{cone} if
$v\in K\Longrightarrow \lambda v\in K$ for all $\lambda \ge0$.   
Given a cone $K$, we denote by $K^*$ its \emph{polar cone} 
$K^*$ to be the set $\{w:v\cdot w\leq 0,\;\forall v\in K\}$.  Next, for a
given closed set $D\subseteq \R^d$ and a $z\in D$, we define the
\emph{proximal normal cone} to $D$ at $z$ to be the set
$$N^p_{D}(z)\equiv \{v\in \R^d:\exists C>0 \text{ s.t. }  (y-z)\cdot\nu\leq
C|z-y|^2,\quad \forall y\in D\}$$ 
and the \emph{Clarke tangent cone} to $D$ at $z$ to be the set
$$
\hat{T}_D(z)\equiv \{v\in\R^d: \forall z_n\in D \text{ s.t. } z_n\rightarrow
z,\quad\exists v_n\in T_D(z_n)\text{ s.t. }v_n\rightarrow v\}.$$
Note that $\hat{T}_D(z)$ is always convex.

%When the set $D$ is sleek, some of these cones are the same:
%\begin{thm}
%For a closed set $D\subseteq \R^d$, the following are equivalent:
%\begin{enumerate}
%\item $N_D(z)=\hat{N}_D(z)$
%\item $T_D(z)=\hat{T}_D(z)$
%\end{enumerate}
%When $D$ satisfies either of the above conditions we say that $D$ is \emph{sleek} (a.k.a. tangentially regular, Clarke regular).
%\end{thm}
%\begin{proof}
%See Corollary 6.29 in~\cite{RW}
%\end{proof}

We now present a lemma which records the properties of an admissible set
$\O$ in terms of these concepts.

\begin{lem}
\label{convexfacts}
Let $\O$ be admissible. Then
\begin{enumerate}
\item For each $z\in\partial\O$, 
$$
v\in N^p_{\bar{\O}}(z) \Longleftrightarrow \frac{-v}{|v|}\in\nu(z)\text{ for }v\neq 0
$$
\item The graph of $z\longrightarrow N^p_{\bar{\O}}(z)$ is closed. That is,
  if $z_i\in\bar{\O}$, $v_i\in N^p_{\bar{\O}}(z_i), z_i\rightarrow z,$ and
  $v_i\rightarrow v$, then $v\in N^p_{\bar{\O}}(z)$. 

\item $T_{\bar{\O}}(z)=\hat{T}_{\bar{\O}}(z)$, and so it is convex for all $z\in\bar{\O}$.

\item $N^p_{\bar{\O}}(z)=T_{\bar{\O}}(z)^*$ for all $z\in\bar{\O}$.

\end{enumerate}
\end{lem}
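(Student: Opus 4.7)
My plan is to handle the four parts in the order 1, 2, then establish the common identity $T_{\bar\O}(z)=(N^p_{\bar\O}(z))^\ast$, and finally deduce Parts 4 and 3 from it.

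Parts 1 and 2 I expect to be quick. Part 1 is a direct translation: for $v\neq 0$, the inequality $(y-z)\cdot v\le C|y-z|^2$ is equivalent to $(y-z)\cdot(-v/|v|)+(C/|v|)|y-z|^2\ge 0$, which is exactly $-v/|v|\in\nu(z)$. For Part 2 I would exploit the fact that Part 1 of Definition \ref{admissible} supplies a single constant $C_0$ valid at every boundary point and every unit inward normal: by Part 1 of the lemma, any $v_i\in N^p_{\bar\O}(z_i)$ with $v_i\neq 0$ satisfies $(y-z_i)\cdot v_i\le C_0|v_i|\,|y-z_i|^2$ for all $y\in\bar\O$, and I pass to the limit in $i$. (The cases $v=0$, or $z$ interior, are trivial.) Along the way I would note that $N^p_{\bar\O}(z)$ is a closed convex cone: closed by Part 2 with $z_n=z$ constant, convex because adding two proximal inequalities adds the constants.

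The main substantive step is the identity $T_{\bar\O}(z)=(N^p_{\bar\O}(z))^\ast$. The inclusion $\subseteq$ is routine: for $v\in T_{\bar\O}(z)$ with witnesses $t_n\downarrow 0$, $v_n\to v$, $z+t_nv_n\in\bar\O$, and any $w\in N^p_{\bar\O}(z)$, the proximal inequality gives $t_nv_n\cdot w\le Ct_n^2|v_n|^2$; divide by $t_n$ and let $n\to\infty$. The reverse inclusion is the crux and is where uniform admissibility is essential. Given $v\in(N^p_{\bar\O}(z))^\ast$, let $P_t\in\text{{\rm proj}}_{\bar\O}(z+tv)$, $d(t)=|z+tv-P_t|$, and, when $d(t)>0$, $\mu_t=(z+tv-P_t)/d(t)$. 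Then $\mu_t$ is a unit outward proximal normal at $P_t$, so uniform admissibility yields $(y-P_t)\cdot\mu_t\le C_0|y-P_t|^2$ for all $y\in\bar\O$. Taking $y=z$ and using the elementary bound $|P_t-z|\le 2t|v|$ gives
\begin{equation*}
d(t)/t\;\le\;\mu_t\cdot v+4C_0\,t|v|^2.
\end{equation*}
Since $|\mu_t|=1$, along any subsequence with $\mu_{t_n}\to\mu^\ast$, Part 2 forces $\mu^\ast\in N^p_{\bar\O}(z)$, hence $\mu^\ast\cdot v\le 0$, and the display drives $d(t_n)/t_n\to 0$. Thus $d(t)/t\to 0$, and setting $u(t)=(P_t-z)/t$ gives $u(t)\to v$ with $z+tu(t)\in\bar\O$, so $v\in T_{\bar\O}(z)$.

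With the identity in hand, Parts 3 and 4 fall out. Part 4 follows by taking polars: $N^p_{\bar\O}(z)$ being closed and convex, $T_{\bar\O}(z)^\ast=(N^p_{\bar\O}(z))^{\ast\ast}=N^p_{\bar\O}(z)$. For Part 3, the inclusion $\hat T\subseteq T$ is automatic (take $z_n=z$ constant), so I would only need $T_{\bar\O}(z)\subseteq\hat T_{\bar\O}(z)$. Fix $v\in T_{\bar\O}(z)=(N^p_{\bar\O}(z))^\ast$ and $z_n\in\bar\O$ with $z_n\to z$, and set $w_n=\text{{\rm proj}}_{N^p_{\bar\O}(z_n)}(v)$, well-defined since $N^p_{\bar\O}(z_n)$ is a closed convex cone. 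Standard projection facts give $v-w_n\in(N^p_{\bar\O}(z_n))^\ast=T_{\bar\O}(z_n)$ and $v\cdot w_n=|w_n|^2$, so $|w_n|\le|v|$. Any subsequential limit $w$ of $\{w_n\}$ lies in $N^p_{\bar\O}(z)$ by Part 2, whence $v\cdot w\le 0$; but $v\cdot w=\lim|w_n|^2=|w|^2\ge 0$ forces $w=0$. Therefore $w_n\to 0$, and $v_n:=v-w_n\in T_{\bar\O}(z_n)$ converges to $v$, giving $v\in\hat T_{\bar\O}(z)$. The single hard step is the estimate $d(t)/t\to 0$: without a \emph{uniform} exterior-ball constant the proximal constant for $\mu_t$ would blow up as $t\to 0$ and this bound would collapse.
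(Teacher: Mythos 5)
Your proof of Parts 1 and 2 matches the paper's (Part 1 is a direct rewriting of the defining inequality; Part 2 passes to the limit using the \emph{uniform} constant $C_0$ from Part 1 of Definition \ref{admissible}). Where you diverge is Parts 3 and 4: the paper simply says these ``follow in a standard way from 2'' and cites Chapter 4 of~\cite{Vinter} and Corollary 6.29 of~\cite{RW}, whereas you give a self-contained argument. Your route is to first prove the identity $T_{\bar\O}(z)=(N^p_{\bar\O}(z))^\ast$ directly, the hard direction being $(N^p_{\bar\O}(z))^\ast\subseteq T_{\bar\O}(z)$, which you get by projecting $z+tv$ onto $\bar\O$ and using the uniform prox-regular constant $C_0$ to force $d(t)/t\to 0$ via the graph-closedness of Part 2. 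From that identity Part 4 falls out by the bipolar theorem (since $N^p$ is a closed convex cone, as you correctly observe), and Part 3 follows by a Moreau-decomposition argument: projecting $v$ onto $N^p_{\bar\O}(z_n)$, using $v-w_n\in T_{\bar\O}(z_n)$, and showing $w_n\to 0$ by graph-closedness. This is a correct and genuinely more elementary treatment: the paper's citation to Corollary 6.29 of Rockafellar--Wets implicitly routes through Clarke regularity, while your argument stays entirely inside the proximal/tangent-cone calculus already set up in the paper and isolates precisely where the uniform exterior-ball constant is used (the bound $d(t)/t\le\mu_t\cdot v+4C_0t|v|^2$). The trade-off is length: the paper buys brevity by deferring to standard references, while your version buys self-containment and transparency about the role of admissibility.
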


\begin{proof}
1. is immediate from our definitions.

2. follows from 1. and Part 1 of Definition \ref{admissible}. Indeed, there
exists a $C_0>0$ such that for each $i$, 
\begin{equation}
\label{starstar}
(z_i-y)\cdot v_i +C_0|v_i||z_i-y|^2\geq 0,\quad\forall y\in\bar{\O}
\end{equation}
(note that when $z_i\in\O$, $v_i=0$ and (\ref{starstar}) holds trivially).
Taking $i\rightarrow \infty$ we see that $
(z-y)\cdot v +C_0|v||z-y|^2\geq 0$ for all $y\in\bar{\O},$
from which it follows that $v\in N^p_{\bar{\O}}(z)$.

3. and 4. follow in a standard way from 2.
See Chapter 4. of~\cite{Vinter} and Chapter 6 of~\cite{RW} (in particular
Corollary 6.29) for the details. 
\end{proof}

Using ideas from~\cite{Cornet}, we now prove Theorem \ref{RepresentationTheorem}.
\begin{proof} (Proof of Theorem \ref{RepresentationTheorem}) First suppose
  $(x_\cdot,\ell_\cdot)$ is a solution to (\ref{reflectedODE}). From Theorem
  \ref{variationinequality} and its proof, we see that $x_\cdot$ and $\ell_\cdot$
  are locally Lipschitz and therefore that
$\dot{x}_t=\sigma(x_t)\dot{w}_t+\dot{\ell}_t,\;t\text{-a.e.}$
Since $x_{t+h}$ and $x_{t-h}$ are in $\bar{\O}$, we have that
\begin{equation}
\label{TandminusT}
\dot{x}_t\in -T_{\bar{\O}}(x_t)\cap T_{\bar{\O}}(x_t),\quad t\text{-a.e.},
\end{equation}
and, because $T_{\bar{\O}}(x_t)$ is convex,
$\dot{x}_t$ is the projection of $\sigma(x_t)\dot{w}_t$ onto $T_{\bar{\O}}(x_t)$ if and only if
$\bigl(\sigma(x_t)\dot{w}_t-\dot{x}_t)\cdot (v-\dot{x}_t\bigr)\leq 0$ for
all, $v\in T_{\bar{\O}}(x_t)$.  
Note that by property 1. of Lemma \ref{convexfacts}, $-\dot{\ell}_t\in N^p_{\bar{\O}}(x_t)$
(when $x_t\in\O$ this holds trivially), and so, by property 4. of Lemma
\ref{convexfacts} and (\ref{TandminusT}), we have that
$$
\dot{x}_t\cdot\dot{\ell}_t\leq 0,\quad \dot{x}_t\cdot\dot{\ell}_t\geq
0\Longrightarrow \dot{x}_t\cdot\dot{\ell}_t = 0.$$
Therefore, using property 4. again, we have that
$$
(\sigma(x_t)\dot{w}_t-\dot{x}_t)\cdot (v-\dot{x}_t)=-\dot{\ell}_t\cdot
(v-\dot{x}_t)=-\dot{\ell}_t\cdot v\leq 0 $$
as desired.

Conversely, suppose $x_\cdot$ is a solution to (\ref{geometricrep}), and set
$\ell_t\equiv \int_0^t \dot{x}_s-\sigma(x_s)\dot{w}_s ds$.  Then $\ell_0=0$ and, since
$\sigma$ is bounded, $\ell_\cdot$ is a continuous function of locally
bounded variation.  Finally, because $\dot{x}_t$ is the projection of
$\sigma(x_t)\dot{w}_t$ onto the convex set $T_{\bar{\O}}(x_t)$, we have that
$$
-\dot{\ell}_t\cdot (v-\dot{x}_t)=(\sigma(x_t)\dot{w}_t-\dot{x}_t)\cdot
(v-\dot{x}_t)\leq 0,\quad\forall v\in T_{\bar{\O}}(x_t).$$
Since $\dot{x}_t \in T_{\bar{\O}}(x_t)$ and $T_{\bar{\O}}(x_t)$ is a convex cone, for each
$v\in T_{\bar{\O}}(x_t)$, $x_t+v\in T_{\bar{\O}}(x_t)$.  Thus, by replacing $v$ with $v+x_t$
in the inequality above, we find that
$-\dot{\ell}_t\cdot v\leq 0$ for all $v\in T_{\bar{\O}}(x_t)$, 
and so $-\dot{\ell}_t\in T_{\bar{\O}}(x_t)^*= N^p_{\bar{\O}}(x_t)$.  
Finally, by property 1. of Lemma \ref{convexfacts},
this implies that $(x_t,\ell_t)$ is a solution to (\ref{reflectedODE}).
\end{proof}

%%%%%NEWSECTION%%%%%%%%%%%%%%%%%%%%%%%%%%

\section{\label{Step2} Tightness of the Approximating Measures}

Let $\bigl(C([0,\infty );\R^r),\F,\Wi\bigr)$ be the standard
  $r$-dimensional Wiener space.  That is, $\F$ is the Borel field for
$C([0,\infty );\R^r)$ and $\Wi$ is the standard Wiener measure.
We will use $W_\cdot$ to denote a generic
Wiener path and $\F_t$ to denote the $\sigma $-algebra generated by
$W_\cdot\restriction [0,t]$.  
Finally, for each positive integer $N$, let $W^N_{\cdot}$ denote the $N$-dyadic
linear polygonalization of $W_\cdot$.  That is, $W^N_{m2^{-N}}=W_{m2^{-N}}$
and $W^N_\cdot$ is linear on $[m2^{-N},(m+1)2^{-N}]$ for each $m\in\Bbb N$.

Next, $\O\subset\R^d$ will be a bounded, admissible domain, and
$b:\bar\O\longrightarrow \R^d$ and $\sigma:\bar\O\longrightarrow
\rm{Hom}(\R^r;\R^d)\bigr)$ will be uniformly Lipschitz continuous functions.
Given a starting point $x_0\in\bar{\O}$, for each $W^N_{\cdot}$,
$(X^N_\cdot,L^N_\cdot)$ will denote
the solution to the reflected ODE (\ref{reflectedODE}) with $w_t$ and
$\sigma (x)$ replaced by, respectively,
$$\begin{pmatrix} W^N_t\\ t \end{pmatrix}\in\R^r\times \R\quad\text{and} \quad
\begin{pmatrix}\sigma(x)\\b(x)\end{pmatrix}\in\rm{Hom}\bigl(\R^r\times \R;\R^d\times \R).$$ 
$\{X^N_t:\,t\ge0\}$ and $\{L^N_t:\,t\ge0\}$ are then progressively
  measurable with respect to $\{\F_t:\,t\ge0\}$, and we will use $\Bbb P^N$
  on the \emph{$(X,L,W)$-pathspace} 
$C\bigl([0,\infty );\bar\O\bigr)\times C\bigl([0,\infty );\R^d\bigr)\times
C\bigl([0,\infty );\R^r\bigr)$ to denote the distribution of the triple
  $(X^N_t,L^N_t,W^N_t)$ under $\Wi$.   

In first subsection, we show that the family $\{\Bbb P^N:\,N\ge0\}$ is
tight on the $(X,L,W)$-pathspace.  In second subsection, we also develop
some estimates which will needed for the next section. 

\subsection{Tightness of the ${\Bbb P}^N$}

By Kolmogorov's Continuity Criterion, we will know that $\{\Bbb
P^N:\,N\ge0\}$ is tight as soon as we prove that for each $m\in{\Bbb N}$
and $T>0$   there exists a $C_m(T)<\infty $, which is independent of $N$, such that
\begin{eqnarray}
\label{inductionW}
\Bbb E\bigl[|W^N_t-W^N_s|^{2^{m+1}}\bigr]&\leq& C_m(T)(t-s)^{2^m}
\end{eqnarray}
\begin{eqnarray}
\label{inductionX}
\Bbb E\bigl[|X^N_t-X^N_s|^{2^{m+1}}\bigr]&\leq& C_m(T)(t-s)^{2^m}
\end{eqnarray}
\begin{eqnarray}
\label{inductionL}
\Bbb E\bigl[|L^N_t-L^N_s|^{2^{m+1}}\bigr]&\leq& C_m(T)(t-s)^{2^m}.
\end{eqnarray}

First note that (\ref{inductionW}) is an easy consequence of the equality
$\Bbb E\bigl[|W_t-W_s|^{2^{m+1}}\bigr]=C_m(t-s)^{2^m}$ where $C_m=\Bbb
E\bigl[|W_1|^{2^{m+1}}\bigr]$.

The proofs of (\ref{inductionX}) and (\ref{inductionL}) are a little more
involved. 

\begin{lem}
There is a $C<\infty $ such that for all $s<t\le s+2^{-N}$,  
\begin{eqnarray}
\label{BVbound}
|X^N_t-X^N_s|\leq C|W^N_t-W^N_s|+C(t-s)
\end{eqnarray}
\end{lem}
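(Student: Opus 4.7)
The plan is to combine Theorem~\ref{RepresentationTheorem} with the fact that projection onto a convex cone containing the origin is non-expansive.

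Applying Theorem~\ref{RepresentationTheorem} to the setup of Section~\ref{Step2} (where the ``input'' is $(W^N_\tau,\tau)$ and the coefficient matrix is $\bigl(\begin{smallmatrix}\sigma(x)\\b(x)\end{smallmatrix}\bigr)$), one obtains the $\tau$-a.e.\ identity
$$\dot X^N_\tau=\text{{\rm proj}}_{T_{\bar\O}(X^N_\tau)}\bigl(\sigma(X^N_\tau)\dot W^N_\tau+b(X^N_\tau)\bigr).$$
By Lemma~\ref{convexfacts}(3), $T_{\bar\O}(X^N_\tau)$ is a convex cone and hence contains $0$, so projection onto it satisfies $|\text{{\rm proj}}(v)|\le|v|$. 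Since $\sigma$ and $b$ are uniformly bounded on the compact set $\bar\O$, this produces a constant $M$ (depending only on $\sigma$, $b$, and $\bar\O$) such that
$$|\dot X^N_\tau|\le M|\dot W^N_\tau|+M,\qquad \tau\text{-a.e.}$$

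Integrating this estimate over $[s,t]$ yields $|X^N_t-X^N_s|\le M\int_s^t|\dot W^N_\tau|\,d\tau+M(t-s)$. The hypothesis $s<t\le s+2^{-N}$ forces $[s,t]$ to meet at most two adjacent dyadic cells. In the case where $[s,t]$ lies entirely within a single dyadic cell $[m2^{-N},(m+1)2^{-N}]$, $\dot W^N$ is constant on $[s,t]$ and the integral equals exactly $|\dot W^N|(t-s)=|W^N_t-W^N_s|$, so the claimed inequality follows immediately with $C=M$.

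The main obstacle is the straddling case, in which the unique interior dyadic point $u=(m+1)2^{-N}$ lies strictly between $s$ and $t$. Splitting the integral at $u$ and applying the single-cell estimate on each of $(s,u)$ and $(u,t)$ produces the intermediate bound
$$|X^N_t-X^N_s|\le M\bigl(|W^N_u-W^N_s|+|W^N_t-W^N_u|\bigr)+M(t-s),$$
which controls $|X^N_t-X^N_s|$ by the total variation of $W^N$ on $[s,t]$ rather than by the displacement $|W^N_t-W^N_s|$. Upgrading this to the displacement form stated in the lemma is the delicate step: I expect one must exploit both $(u-s)\le 2^{-N}$ and $(t-u)\le 2^{-N}$ to absorb the cancellation discrepancy between the two straight pieces of $W^N$ into the additive $M(t-s)$ term after enlarging $C$, rather than treating the two dyadic half-intervals as two black-box single-cell estimates glued together by the triangle inequality.
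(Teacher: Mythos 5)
Your single-cell argument is correct and is a genuinely different route from the paper's: the paper deduces $d|X^N|_\tau\le C\,d\bigl(|W^N|_\tau+\tau\bigr)$ from Lemma \ref{variationinequality} (i.e.\ ultimately from the Lions--Sznitman bound $d|\ell|_t\le d|y|_t$), whereas you get the pointwise bound $|\dot X^N_\tau|\le M|\dot W^N_\tau|+M$ from Theorem \ref{RepresentationTheorem} together with the observation that projection onto the closed convex cone $T_{\bar\O}(X^N_\tau)$ (Lemma \ref{convexfacts}) fixes the origin and is non-expansive. Both routes use, in the same way, that $\dot W^N$ is constant on a dyadic cell, so that $\int_s^t|\dot W^N_\tau|\,d\tau=|W^N_t-W^N_s|$ there; your version is arguably more self-contained.

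The straddling case, however, is a genuine gap in your write-up, and the repair you anticipate cannot work: the discrepancy between the total-variation bound $M\bigl(|W^N_u-W^N_s|+|W^N_t-W^N_u|\bigr)$ and the displacement bound is of size $2^N|\Delta W|\cdot(t-s)$, and the slopes $2^N|\Delta W|$ are typically of order $2^{N/2}$, so it cannot be absorbed into $C(t-s)$ with a path- and $N$-independent $C$. In fact the displacement form of (\ref{BVbound}) fails across a grid point once reflection is present: in $\O=(0,1)$ with $\sigma\equiv1$, $b\equiv0$, suppose $X^N$ sits at the boundary point $0$ throughout a cell on which the increment is $-a<0$ (the downward input is entirely cancelled by reflection) and the next increment is $a'\approx a$; taking $s=u-\delta$, $t=u+\delta$ around the grid point $u$ gives $W^N_t-W^N_s\approx0$ and $t-s=2\delta$, while $X^N_t-X^N_s=2^Na'\delta$, forcing $C\gtrsim 2^Na'\sim2^{N/2}$. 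To be fair, the paper's own proof is no better on this point --- its appeal to ``Minkowski's inequality'' for adjacent cells likewise only produces the bound with $|W^N_u-W^N_s|+|W^N_t-W^N_u|$ on the right --- and the defect is harmless, since in the proofs of (\ref{inductionX}) and (\ref{inductionL}) the lemma is only ever invoked for the pair $(\floor{u},u)$, which lies in a single cell. So the honest conclusion of your argument is either the lemma restricted to one dyadic cell (which is all that is used downstream) or the estimate with the variation $|W^N|_t-|W^N|_s$ in place of $|W^N_t-W^N_s|$.
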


\begin{proof}  When $s$ and $t$ lie in the same $N$-dyadic interval, 
this follows more or less immediately from Theorem \ref{variationinequality}.
Namely, 
$$\begin{aligned}
|X^N_t-X^N_s|&\leq |X^N|_t-|X^N|_s\leq C\bigl((|W^N|_t-|W^N|_s)+(t-s)\bigr)
\\&=C\bigl(|W^N_t-W^N_s|+(t-s)\bigr),\end{aligned}$$
where the last equality comes from the fact that $s$ and $t$ lie in the
same $N$-dyadic interval.  When they are in adjacent $N$-dyadic intervals,
one can reduce to the case when they are in the same $N$-dyadic interval by
an application of Minkowski's inequality.
\end{proof}

It remains to handle $s$ and $t$ with $t-s>2^{-N}$, and for this we will
need the next two lemmas.  Here, and elsewhere, $\floor{u}$ is shorthand
for the largest $N$-dyadic number $m2^{-N}$ dominated by $u$.  That is,
$\floor{u}$ equals $2^{-N}$ times the integer part of $2^Nu$.

\begin{lem}
For $m\geq 0$ there exists a $C_m<\infty $ such that for all $s<t$
\begin{eqnarray}
\label{crazylemma}
\Bbb E\left[\left(\int_s^t|W^N_u-W^N_\floor{u}|\ d|W^N|_u\right)^{2^m}\right]\leq C_m(t-s)^{2^m}
\end{eqnarray}
and
\begin{eqnarray}
\label{crazylemma2}
\Bbb E\left[\left(\int_s^t(u-\floor{u})\ d|W^N|_u\right)^{2^m}\right]\leq C_m(t-s)^{2^m}
\end{eqnarray}
\end{lem}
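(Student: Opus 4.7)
The plan is to exploit the piecewise-linear structure of $W^N_\cdot$ in order to evaluate both integrals explicitly on each $N$-dyadic subinterval, and then reduce the bounds to routine moment estimates for sums of i.i.d.\ Gaussians. Write $\Delta_k W \equiv W_{(k+1)2^{-N}} - W_{k2^{-N}}$. On the interval $I_k \equiv [k2^{-N},(k+1)2^{-N}]$ one has $W^N_u - W^N_{\floor{u}} = 2^N(u - k2^{-N})\Delta_k W$ and $d|W^N|_u = 2^N|\Delta_k W|\,du$, so direct integration on $I_k$ gives the two clean identities
$$\int_{I_k}\bigl|W^N_u - W^N_{\floor{u}}\bigr|\,d|W^N|_u = \tfrac12|\Delta_k W|^2, \qquad \int_{I_k}(u-\floor{u})\,d|W^N|_u = \tfrac{2^{-N}}{2}|\Delta_k W|.$$

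First I would assume that $s$ and $t$ are themselves $N$-dyadic and set $K = \{k : I_k \subseteq [s,t]\}$, so $|K| = 2^N(t-s)$. Summing the identities above, the left-hand sides of (\ref{crazylemma}) and (\ref{crazylemma2}) become $\tfrac12\sum_{k\in K}|\Delta_k W|^2$ and $\tfrac{2^{-N}}{2}\sum_{k\in K}|\Delta_k W|$ respectively. Combining the convexity (power-mean) bound $\bigl(\sum_{k=1}^n a_k\bigr)^{2^m}\leq n^{2^m-1}\sum_k a_k^{2^m}$ with independence of the $\Delta_k W$ and the Gaussian moment identity $\bE[|\Delta_k W|^p] = c_p 2^{-Np/2}$, the bound (\ref{crazylemma}) follows from
$$\bE\biggl[\Bigl(\sum_{k\in K}|\Delta_k W|^2\Bigr)^{2^m}\biggr] \leq |K|^{2^m}\bE\bigl[|\Delta_k W|^{2^{m+1}}\bigr] \leq C_m\bigl(|K|\,2^{-N}\bigr)^{2^m} = C_m(t-s)^{2^m}.$$
An analogous computation for (\ref{crazylemma2}) produces $C_m(t-s)^{2^m}\cdot 2^{-N\cdot 2^{m-1}}$, i.e.\ the desired bound with an extra decaying factor that is harmlessly absorbed into $C_m$.

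The only mild wrinkle is the boundary case when $s$ or $t$ falls strictly inside a dyadic interval: this contributes at most two partial-interval terms, on each of which the pointwise per-interval computation still yields an upper bound of $\tfrac12|\Delta_k W|^2$ and $\tfrac{2^{-N}}{2}|\Delta_k W|$ respectively, and these corrections are absorbed into the constant by an application of Minkowski's inequality (as was already used in the proof of (\ref{BVbound})). I do not foresee any genuine obstacle; once the per-interval identities are recorded, the lemma is a bookkeeping exercise in Jensen plus Gaussian moments.
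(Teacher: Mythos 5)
Your per-interval identities and the power-mean/moment calculation for the $N$-dyadic case are correct and are essentially what the paper does (the paper phrases the aggregation step as a Minkowski inequality in $L^{2^m}$, which on the full dyadic blocks gives exactly the same $|K|\cdot 2^{-N}=(t-s)$ bookkeeping). The problem is the boundary case, which you dismiss as a ``mild wrinkle,'' and there the argument as written has a genuine gap. Bounding the partial-interval contribution by the full-interval value $\tfrac12|\Delta_k W|^2$ (resp.\ $\tfrac{2^{-N}}2|\Delta_k W|$) produces an $L^{2^m}$-norm of order $2^{-N}$, independent of how short the partial piece actually is. So when $t-s<2^{-N}$ (i.e.\ $K=\varnothing$ and only partial pieces appear), you are trying to dominate $(t-s)^{2^m}$ by something of order $(2^{-N})^{2^m}$, and no $N$-independent $C_m$ can make that work; the correction cannot be ``absorbed into the constant.'' The lemma is stated for all $s<t$, so this range must be covered.

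The repair is exactly the refinement the paper uses: on a sub-interval $[s,t]$ of a single dyadic block $I_k$, do not replace the partial integral by the full-interval value but instead note that
$$\int_s^t\bigl|W^N_u-W^N_{\floor u}\bigr|\,d|W^N|_u
=4^N|\Delta_kW|^2\int_s^t(u-\floor u)\,du\le 2^N|\Delta_kW|^2(t-s),$$
and similarly $\int_s^t(u-\floor u)\,d|W^N|_u\le|\Delta_kW|(t-s)$. These bounds scale linearly with the length of the sub-interval, so their $L^{2^m}$-norms are $\le C_m(t-s)$ and $\le C_m 2^{-N2^{m-1}}(t-s)$. With that per-piece estimate in hand, a single Minkowski inequality over the decomposition of $[s,t]$ into sub-dyadic pieces already gives $C_m(t-s)$ for arbitrary $s<t$, making the separate power-mean step and the dyadic-endpoint reduction unnecessary. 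Your approach is salvageable and close to the paper's, but as stated the treatment of partial intervals does not prove the lemma in the regime $t-s<2^{-N}$.
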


\begin{proof}
If $s<t$ lie in the same $N$-dyadic interval we have that
\begin{align*}
\int_s^t|W^N_u-W^N_\floor{u}|d|W^N|_u=&4^N|\Delta
W^N_\floor{s}|^2\int_s^t(u-\floor{u})du\leq 2^N|\Delta
W^N_\floor{s}|^2(t-s)\\ 
\int_s^t(u-\floor{u})d|W^N|_u=&2^N|\Delta
W^N_\floor{s}|\int_s^t(u-\floor{u})du\leq |\Delta W^N_\floor{s}|(t-s) 
\end{align*}
and so
\begin{equation}
\label{triangleMinkowski}
\begin{aligned}
&\Bbb E\left[\left(\int_s^t|W^N_u-W^N_\floor{u}|d|W^N|_u\right)^{2^m}\right]^{2^{-m}}\leq C_m(t-s)\\
&\Bbb E\left[\left(\int_s^t(u-\floor{u})d|W^N|_u\right)^{2^m}\right]^{2^{-m}}
\leq C_m2^{-N2^{m-1}}(t-s)\leq C_m(t-s).
\end{aligned}
\end{equation}
Applying the Minkowski inequality, we see that the inequalities
(\ref{triangleMinkowski}) continue to hold for general $s<t$. 
\end{proof}

\begin{lem}  Let $\phi$ and $\alpha $ be as in Part 2 of Definition \ref{admissible}, and set
  $\gamma =-\frac{2C_0}{\alpha }$, where $C_0$ is the constant in Part 1 of
  that definition.  
Given $s\ge0$, there exist $\{\F_t:\,t\ge0\}$ progressively measurable
functions $\{Z_{\tau ,s}:\,\tau \ge s\}$ and $\{V_{\tau ,s}:\,\tau \ge s\}$
satisfying 
\begin{equation}
\label{ZVProperties}
|Z^N_{u,s}|\leq C|X^N_u-X^N_s|,\text{ } |Z^N_{u_2,s}-Z^N_{u_1,s}|\leq
C|X^N_{u_2}-X^N_{u_1}|,\text{ and } |V^N_{u,s}|\leq C, 
\end{equation}
with a constant $C<\infty $, which is independent of $s$ and $N$, such that
\begin{equation}
\label{awesomeinequality}
e^{\gamma \phi(X^N_t)}
|X^N_t-X^N_s|^2\leq\int_s^t Z^N_{u,s} dW^N_u+\int_s^t V^N_{u,s} du\text{
  for all }t>s.
\end{equation}
\end{lem}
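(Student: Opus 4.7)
My plan is to apply the classical (pathwise) product rule to $e^{\gamma\phi(X^N_t)}|X^N_t-X^N_s|^2$, treating $s$ as fixed. Since $W^N_\cdot$ is piecewise linear, both $X^N_\cdot$ and $L^N_\cdot$ are locally Lipschitz, so this is just Leibniz calculus with no It\^o correction. Using
$$dX^N_t=\sigma(X^N_t)\,dW^N_t+b(X^N_t)\,dt+\nu(X^N_t)\,d|L^N|_t,$$
the product rule produces three contributions: a $dW^N_t$-piece, a $dt$-piece, and a reflection piece
$$e^{\gamma\phi(X^N_t)}\bigl[2(X^N_t-X^N_s)+\gamma|X^N_t-X^N_s|^2\nabla\phi(X^N_t)\bigr]\cdot\nu(X^N_t)\,d|L^N|_t.$$

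The key step is to observe that, with $\gamma=-\frac{2C_0}{\alpha}$, this reflection piece is nonpositive. On $\partial\O$, Part 1 of Definition \ref{admissible} gives $2(X^N_t-X^N_s)\cdot\nu(X^N_t)\le 2C_0|X^N_t-X^N_s|^2$, while Part 2 gives $\nabla\phi(X^N_t)\cdot\nu(X^N_t)\ge\alpha$; since $\gamma<0$, these combine to give an upper bound of $(2C_0+\gamma\alpha)|X^N_t-X^N_s|^2=0$. This is exactly the cancellation used in the proof of Lemma \ref{contraclem} and is the reason for the prescribed $\gamma$. Dropping this nonpositive term and integrating from $s$ to $t$, the lower endpoint vanishes because $|X^N_s-X^N_s|^2=0$, and the remaining $dW^N$- and $du$-contributions define the required processes: $V^N_{u,s}$ is $e^{\gamma\phi(X^N_u)}$ times the dot product of the bracket (evaluated at $X^N_u$) with $b(X^N_u)$, and $Z^N_{u,s}$ is the vector in $\R^r$ characterized by $Z^N_{u,s}\cdot w=e^{\gamma\phi(X^N_u)}\bigl[2(X^N_u-X^N_s)+\gamma|X^N_u-X^N_s|^2\nabla\phi(X^N_u)\bigr]\cdot\sigma(X^N_u)w$ for every $w\in\R^r$.

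The bounds in (\ref{ZVProperties}) are then straightforward. Because $\O$ is bounded, $|X^N_u-X^N_s|^2$ is dominated by a constant multiple of $|X^N_u-X^N_s|$; combined with the uniform boundedness and Lipschitz continuity of $\sigma$, $b$, $\phi$, and $\nabla\phi$ on $\bar\O$, this yields $|Z^N_{u,s}|\le C|X^N_u-X^N_s|$, $|V^N_{u,s}|\le C$, and the Lipschitz estimate on $u\mapsto Z^N_{u,s}$, all with constants independent of $s$ and $N$. Progressive measurability of $Z^N_{\cdot,s}$ and $V^N_{\cdot,s}$ is inherited directly from that of $X^N_\cdot$. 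The main conceptual obstacle is the reflection-term cancellation, but once that computation is in hand the rest is bookkeeping.
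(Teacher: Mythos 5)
Your proof is correct and follows essentially the same route as the paper: apply the pathwise product rule, discard the reflection term as nonpositive using the choice $\gamma=-\tfrac{2C_0}{\alpha}$ together with Parts 1 and 2 of Definition~\ref{admissible}, and read off $Z^N_{u,s}$ and $V^N_{u,s}$ from the surviving $dW^N$- and $dt$-contributions. The paper simply invokes ``just as in the proof of Lemma~\ref{contraclem}'' for the differential inequality and then states the same formulas for $Z^N$ and $V^N$; you have made the cancellation at the boundary explicit, which is the right reading of that reference.
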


\begin{proof}  Just as in the proof of Lemma \ref{contraclem},
\begin{align*}
d\bigl(e^{\gamma\phi(X^N_t)}&|X^N_t-X^N_s|^2\bigr)\\
\leq&e^{\gamma\phi(X^N_t)}\Bigl(2(X^N_t-X^N_s)+\gamma|X^N_t-X^N_s|^2\nabla
\phi(X^N_t)\Bigr)\sigma(X^N_t)dW^N_t\\
&+e^{\gamma\phi(X^N_t)}\Bigl(2(X^N_t-X^N_s)+\gamma|X^N_t-X^N_s|^2\nabla\phi(X^N_t)\Bigr)b(X^N_t)dt.
\end{align*}
from which (\ref{awesomeinequality}) follows with
$$
Z^N_{u,s}=e^{\gamma\phi(X^N_u)}\Bigl(2(X^N_u-X^N_s)
+|X^N_u-X^N_s|^2\gamma\nabla\phi(X^N_u)\Bigr)\sigma(X^N_u) 
$$
and
$$
V^N_{u,s}=e^{\gamma\phi(X^N_u)}\Bigl(2(X^N_u-X^N_s)
+|X^N_u-X^N_s|^2\gamma\nabla\phi(X^N_u)\Bigr)\cdot b(X^N_u).$$
Since $\nabla\phi$, $b$, and $\sigma$ are Lipschitz continuous functions on
the bounded domain $\O$, it is clear how to choose the $C$ in (\ref{ZVProperties}).
\end{proof}
%%%%%%%%%%%%%%%%%

We now prove (\ref{inductionX}) in the case that $t-s>2^{-N}$ by
induction on $m$. Taking into account the fact that $\phi$ is bounded, we
can use (\ref{awesomeinequality}) to derive the estimate 
\begin{equation}
\label{inductionfun}
\begin{aligned}
\Bbb E\bigl[|X^N_t-X^N_s|^{2^{m+1}}\bigr]\leq&
C_m\Bbb E\left[\left(\int_s^t(Z^N_{u,s}-Z^N_{\floor{u},s})dW^N_u\right)^{2^m}\right]\\ 
&+ C_m\Bbb E\left[\left(\int_s^t Z^N_{\floor{u},s} dW^N_u\right)^{2^m}\right]\\&\quad+
C_m\Bbb E\left[\left(\int_s^t V^N_{u,s} dW^N_u\right)^{2^m}\right]
\end{aligned}
\end{equation}
for some $C_m<\infty $.  Because $V^N_{u,s}$ is bounded (see (\ref{ZVProperties})),
the third term is bounded by a constant times $(t-s)^{2^m}$. For the first
term we have that, for some constants $C<\infty $,
\begin{align*}
\Bbb E&\left[\left(\int_s^t(Z^N_{u,s}-Z^N_{\floor{u},s})dW^N_u\right)^{2^m}\right]
\leq C\Bbb E\left[\left(\int_s^t|X^N_u-X^N_\floor{u}|d|W^N|_u\right)^{2^m}\right]
\\&\leq C\Bbb E\left[\left(\int_s^t|W^N_u-W^N_\floor{u}|d|W^N|_u\right)^{2^m}\right]
+C\Bbb E\left[\left(\int_s^t(u-\floor{u})d|W^N|_u\right)^{2^m}\right]\\&\leq C(t-s)^{2^m},
\end{align*}
where the first inequality follows from (\ref{ZVProperties}), the second inequality
from (\ref{BVbound}), and the third inequality from (\ref{crazylemma}) and (\ref{crazylemma2}).
Finally, for the second term we have that
\begin{align*}
\Bbb E&\left[\left(\int_s^t Z^N_{\floor{u},s} dW^N_u\right)^{2^m}\right]
\leq C\Bbb E\left[\left(\int_s^t |Z^N_{\floor{u},s}|^2 du\right)^{2^{m-1}}\right]\\&\hskip.25truein
\leq C(t-s)^{(2^{m-1}-1)}\Bbb E\left[\int_s^t |Z^N_{\floor{u},s}|^{2^m}du\right]
\\&\hskip.25truein\leq C(t-s)^{(2^{m-1}-1)}\Bbb E\left[\int_s^t |X^N_\floor{u}-X^N_s|^{2^m}du\right]\\
&\hskip.25truein\leq C(t-s)^{(2^{m-1}-1)}\Bbb E\left[\int_s^t (\floor{u}-s)^{2^{m-1}}du\right]
\\&\hskip.25truein\leq C(t-s)^{(2^{m-1}-1)}\Bbb E\left[\int_s^t (t-s)^{2^{m-1}}du\right]
\leq C(t-s)^{2^m},
\end{align*}
where the first inequality is an application of Burkholder's inequality,
the third inequality follows from (\ref{ZVProperties}), the fourth inequality
is our induction hypothesis, and the fifth inequality follows from our assumption
that $t-s>2^{-N}$.  Hence we will be done once we show that
(\ref{inductionX}) holds when $m=0$.  But we can handle the base case by the
same estimates as above, only now noting that the second term of (\ref{inductionfun})
is $0$ in this case.  

Finally, we must prove (\ref{inductionL}).  Since
$$
dX^N_t=\sigma(X^N_t)dW^N_t+b(X^N_t)dt+dL^N_t
$$
we have that
\begin{align*}
\Bbb E\bigl[|L^N_t-L^N_s|^{2^{m+1}}\bigr]\leq &C
\Bbb E\bigl[|X^N_t-X^N_s|^{2^{m+1}}\bigr]+C\Bbb \Bbb E\left[\left
(\int_s^t\sigma(X^N_u)dW^N_u\right)^{2^{m+1}}\right]
\\&+C\Bbb E\left[\left(\int_s^t b(X^N_u)du\right)^{2^{m+1}}\right]\end{align*}
We already know that the first term is bounded from above by $C(t-s)^{2^{m}}$.
Moreover, because $b$ is bounded and $0\le s<t\le T$, the third term is bounded
above by a constant depending on $T$ times $(t-s)^{2^{m}}$. For the second
term we have that 
\begin{align*}
\Bbb E&\left[\left(\int_s^t\sigma(X^N_u)dW^N_u\right)^{2^{m+1}}\right]\leq
C\Bbb
E\left[\left(\int_s^t\bigl(\sigma(X^N_u)-\sigma(X^N_\floor{u}\bigr)dW^N_u\right)^{2^{m+1}}\right] 
\\&\hskip2.5truein+C\Bbb E\left[\left(\int_s^t\sigma(X^N_\floor{u})dW^N_u\right)^{2^{m+1}}\right]
\\&\leq C\Bbb
E\left[\left(\int_s^t|X^N_u-X^N_\floor{u}|d|W^N|_u\right)^{2^{m+1}}\right]+C(t-s)^{2^{m}}
\\&\leq C\left(\Bbb E\left[\left(\int_s^t|W^N_u-W^N_\floor{u}|d|W^N|_u\right)^{2^m}\right]
+\Bbb E\left[\left(\int_s^t(u-\floor{u})d|W^N|_u\right)^{2^m}\right]\right)
\\&\hskip.25truein+C(t-s)^{2^{m}}\leq C(t-s)^{2^{m-1}}
\end{align*}
where the second inequality follows is an application of Burkholder's inequality
and the fact that $\sigma$ is bounded, the third inequality follows from (\ref{BVbound}),
and the last inequality follows from (\ref{crazylemma}) and
(\ref{crazylemma2}).  Putting these inequalities together we get (\ref{inductionL}).

Given a $\psi :[0,\infty )\longrightarrow \R^d$, $\beta \in(0,1]$, and
$t>s>0$, set
$$\|\psi \|_{\beta,[s,t]}=\sup_{s\le u_1<u_2\le t}\frac{|\psi (u_2)-\psi
(u_1)|}{(u_2-u_1)^\beta }.$$ 
As an immediate consequence of the estimates in (\ref{inductionW}),
(\ref{inductionX}), and (\ref{inductionL}) combined with Kolmogorov's
Continuity Criterion (cf.\ Theorem 3.1.4 in~/cite{StroockBook}), we have the following theorem.

\begin{thm}
\label{Holder}
For each $\beta <\frac12$, $p\in(1,\infty )$, and $T>0$, there exists a $K_{\beta ,p}(T)<\infty $
such that
$$\P\bigl(\|W^N\|_{\beta ,[0,T]}\vee\|X^N\|_{\beta ,[0,T]}\vee\|L^N\|_{\beta
  ,[0,T]}\ge R\bigr)\le K_{\beta ,p}(T)R^{-p}\text{ for  }R>0.$$
\end{thm}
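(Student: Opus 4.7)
The plan is to apply Kolmogorov's continuity criterion to each of $W^N$, $X^N$, $L^N$ separately, using the uniform-in-$N$ moment bounds (\ref{inductionW}), (\ref{inductionX}), (\ref{inductionL}) as input, and then combine via a union bound. All of these bounds have the form $\bE[|Y^N_t - Y^N_s|^{2^{m+1}}] \le C_m(T)(t-s)^{2^m}$ with constants independent of $N$, which is exactly the hypothesis needed.

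Fix $\beta < \tfrac12$ and $p\in(1,\infty)$. First I would choose an integer $m$ large enough that simultaneously $2^{m+1} > p$ and $\beta < \tfrac12 - 2^{-(m+1)}$; such an $m$ exists because the right-hand side tends to $\tfrac12$. With $q = 2^{m+1}$ and time-exponent $2^m = 1 + (q\beta)$ valid for this choice of $\beta$, the version of Kolmogorov's continuity criterion cited (Theorem 3.1.4 of \cite{StroockBook}) delivers, for each $Y^N\in\{W^N,X^N,L^N\}$, an $L^q$-estimate on the $\beta$-Hölder seminorm of the form
$$\bE\bigl[\|Y^N\|_{\beta,[0,T]}^{q}\bigr] \le K'_{\beta,p}(T),$$
with $K'_{\beta,p}(T)$ depending only on $\beta$, $p$, $T$ and the constant $C_m(T)$ from (\ref{inductionW})--(\ref{inductionL}); in particular it is independent of $N$. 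Chebyshev's inequality then yields
$$\P\bigl(\|Y^N\|_{\beta,[0,T]} \ge R\bigr) \le K'_{\beta,p}(T)\,R^{-q} \quad \text{for all } R>0.$$

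Next I would promote the $R^{-q}$ tail to an $R^{-p}$ tail. Since $q>p$, for $R\ge 1$ we have $R^{-q}\le R^{-p}$ directly; for $R\in(0,1]$ we bound the probability trivially by $1\le R^{-p}$. Absorbing the factor $3$ from the union bound
$$\P\Bigl(\|W^N\|_{\beta,[0,T]}\vee \|X^N\|_{\beta,[0,T]}\vee \|L^N\|_{\beta,[0,T]}\ge R\Bigr) \le \sum_{Y^N} \P\bigl(\|Y^N\|_{\beta,[0,T]}\ge R\bigr)$$
into the constant, I set $K_{\beta,p}(T) = 3\max\{1,K'_{\beta,p}(T)\}$, which gives the desired inequality for all $R>0$.

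The step requiring care is the invocation of Kolmogorov's criterion: one must apply the quantitative version that yields a moment bound on the Hölder seminorm (not merely almost-sure Hölder continuity), and one must check that the constant it produces inherits $N$-independence from $C_m(T)$. Both features are standard for the dyadic chaining proof of that criterion, so once the right $m$ has been selected, the remainder of the argument is bookkeeping rather than analysis.
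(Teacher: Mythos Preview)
Your proposal is correct and follows exactly the approach the paper indicates: the paper simply states that the theorem is an immediate consequence of the moment estimates (\ref{inductionW})--(\ref{inductionL}) together with the quantitative form of Kolmogorov's continuity criterion, and you have supplied precisely those details (choice of $m$, Chebyshev, union bound). One small wording glitch: the phrase ``time-exponent $2^m = 1 + (q\beta)$'' should read as a strict inequality $2^m > 1 + q\beta$, but your choice $\beta < \tfrac12 - 2^{-(m+1)}$ already guarantees this, so the argument is sound.
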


%%%%%%%%%%%%%%%%%%%%%%%%%%%%%%%%%%%%%%%%%%%%%%%%%%%%%%%%%%%%%%%%%%%%%%%%%%%%%%%
%%%%%%%%%%%%%%%%%%%%%%%%%%%%%%%%%NEWSECTION%%%%%%%%%%%%%%%%%%%%%%%%%%%%%%%%%%%%%%%
%%%%%%%%%%%%%%%%%%%%%%%%%%%%%%%%%%%%%%%%%%%%

\subsection{Controlling the Variation of $L_\cdot^N$}

In general, the variation of a function cannot be controlled by its uniform
norm.  Thus, before we can apply the tightness result in the previous
subsection to get the sort of result which we are seeking, we must give a
separate argument which shows that the 
variation of $L_\cdot^N$ can be estimated in terms of its uniform norm.  To
be precise, Theorem \ref{thm:mrlemma} says that the variation of
$L^N_\cdot\restriction [0,t]$ 
can be estimated in terms of the uniform norm of $L^N_\cdot\restriction [0,t]$
and the H\"older norm of $X^N_\cdot\restriction [0,t]$.  Hence, since
Theorem \ref{Holder} provides control on the H\"older, and therefore the uniform,
norms of the three processes $W^N_\cdot$, $X^N_\cdot$, and $L^N_\cdot$, our
tightness result will sufficient for our purposes (cf.\ Theorems
\ref{TheoremMartingale} below).  

In the following, and elsewhere, $\|\psi \|_{[t_1,t_2]}=\sup_{\tau \in[t_1,t_2]}|\psi (\tau )|$.

\begin{thm}
\label{thm:mrlemma}
For all $0\leq s<t$,
\begin{eqnarray}
\label{mrlemma}
|L^N|_t-|L^N|_s\leq C\bigl((t-s)R^{-4}\|X^N\|_{\frac14,[s,t]}^4+1\bigr)\|L^N\|_{[s,t]},
\end{eqnarray}
where $R$ is the constant given in Part 3 of Definition \ref{admissible}.
\end{thm}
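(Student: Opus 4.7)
The plan is to exploit Part 3 of Definition \ref{admissible}: the boundary $\partial\O$ is covered by finitely many balls $B(x_i,R)$ and comes equipped with unit vectors $a_i$ satisfying $\nu \cdot a_i \ge \lambda$ for every inward proximal normal $\nu$ at any point of $\partial\O \cap B(x_i, 2R)$. The idea is to split $[s,t]$ into consecutive subintervals $[\tau_{k-1}, \tau_k]$ on each of which the path $X^N_\cdot$ can only touch $\partial\O$ inside a single ball $B(x_{i_k}, 2R)$. Because $dL^N = \nu(X^N)\,d|L^N|$, dotting with $a_{i_k}$ converts the variation of $L^N$ on each subinterval into a multiple of the uniform norm $\|L^N\|_{[s,t]}$, and a H\"older-based counting argument controls the number of subintervals.

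Concretely, I would set $\tau_0 = s$ and, given $\tau_{k-1} < t$, define $u_k = \inf\{u \ge \tau_{k-1} : X^N_u \in \partial\O\}$. If $u_k \ge t$, terminate with $\tau_k = t$; otherwise pick $i_k$ so that $X^N_{u_k} \in B(x_{i_k}, R)$ and set
\[
\tau_k = \inf\{u \ge u_k : X^N_u \notin B(x_{i_k}, 2R)\} \wedge t.
\]
On $[\tau_{k-1}, u_k]$ the path lies in the interior, so $d|L^N| \equiv 0$ there, while on $[u_k, \tau_k]$ every boundary visit occurs inside $B(x_{i_k}, 2R)$, whence $\nu(X^N_v) \cdot a_{i_k} \ge \lambda$ whenever $X^N_v \in \partial\O$. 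Integrating $a_{i_k} \cdot dL^N = a_{i_k} \cdot \nu(X^N)\, d|L^N|$ gives
\[
\lambda\bigl(|L^N|_{\tau_k} - |L^N|_{\tau_{k-1}}\bigr) \le a_{i_k} \cdot (L^N_{\tau_k} - L^N_{u_k}) \le 2\|L^N\|_{[s,t]}.
\]

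For the count $n$, whenever $\tau_k < t$ the path exits $B(x_{i_k}, 2R)$ starting from $X^N_{u_k} \in B(x_{i_k}, R)$, giving $|X^N_{\tau_k} - X^N_{u_k}| \ge R$; the $\tfrac14$-H\"older bound then forces $\tau_k - u_k \ge (R/\|X^N\|_{\frac14, [s,t]})^4$. Since the $[\tau_{k-1}, \tau_k]$ partition $[s,t]$ and $u_k \ge \tau_{k-1}$, summing this lower bound over the $n-1$ subintervals with $\tau_k < t$ yields $n \le 1 + (t-s) R^{-4} \|X^N\|_{\frac14, [s,t]}^4$. Combined with the per-subinterval estimate this produces the theorem with $C = 2/\lambda$.

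The main obstacle is really just the partition bookkeeping: one must confirm that once $X^N$ leaves $B(x_{i_k}, 2R)$ the covering can be ``refreshed'' by whichever $B(x_{i_{k+1}}, R)$ next contains a boundary-touch point, and that subintervals on which $X^N$ is away from $\partial\O$ contribute nothing to $|L^N|$. These are immediate from Parts 1 and 3 of Definition \ref{admissible}, so the substance of the estimate lies in the H\"older counting of excursions, and the exponent $4$ in the statement is precisely what one needs to match $R \le \|X^N\|_{\frac14, [s,t]}\,(\tau_k - u_k)^{1/4}$.
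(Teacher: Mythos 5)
Your proof is correct and follows essentially the same strategy as the paper's: cover $\partial\O$ with the balls from Part~3 of Definition~\ref{admissible}, dot $dL^N$ with $a_{i_k}$ on each excursion to bound its variation by the oscillation of $L^N$, and use the $\tfrac14$-H\"older norm of $X^N$ to count the excursions. The only difference is bookkeeping: you use a two-phase stopping time (first hit $\partial\O$, then exit $B(x_{i_k},2R)$), whereas the paper adds an interior set $\O_0\Subset\O$ to a single stopping-time sequence and then argues that at least half the subintervals involve a ball exit; both yield the same count.
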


Our proof follows the proof of Lemma 1.2 in~\cite{LS}.

\begin{proof}  Let $\O_1,\ldots,\O_n$ denote the open balls
  $B(x_1,2R),\ldots,B(x_n,2R)$ appearing in Part 3 of
  Definition \ref{admissible}, and choose an open set $\O_0$ so 
  such that $\bar{\O}_0\subseteq\O$ and $\bar{\O}\subseteq\O_0\cup \bigcup_{i=1}^n
  B(x_i,R)$.  Given $x\in\bar\O$, let $k(x)$ be the smallest $1\le k\le n$
  such that $x\in B(x_k,R)$, or otherwise let $k(x)$ be $0$.  Next, set $\zeta _0=s$ and define $\zeta _m$ for
  $m\ge1$ inductively so that
$$\zeta _{m+1}=t\wedge \inf\bigl\{\tau \ge\zeta _{m}:\,X^N_\tau\notin
  \O_{k(X^N_{\zeta _{m}})}\bigr\}.$$

Consider the time interval $[\zeta _m,\zeta _{m+1}]$.  If $\zeta _m<t$ and
$k(X^N_{\zeta _m})=0$, then $L^N_\cdot\restriction [\zeta _m,\zeta _{m+1}]$
is constant and so $|L^N|_{\zeta _{m+1}}-|L^N|_{\zeta _m}=0$.  If $\zeta
_m<t$ and $k_m\equiv k(X^N_{\zeta _m})\ge1$, then (cf.\ Part 3 of
Definition \ref{admissible})
$$
(L^N_{\zeta _{m+1}}-L^N_{\zeta _m})\cdot a_{k_m}=\int_{\zeta _m}^{\zeta _{m+1}}{\nu(X^N_\tau )\cdot a_{k_m}
d|L^N|_\tau }\geq \lambda\bigl(|L^N|_{\zeta _{m+1}}-|L^N|_{\zeta _m}\bigr).
$$
Hence, in either case, 
$$
|L^N|_{\zeta _{m+1}}-|L^N|_{\zeta _m}\leq C|L^N_{\zeta _{m+1}}-L^N_{\zeta
  _m}|\leq C\|L^N\|_{[s,t]}.  
$$
At the same time, if $\zeta _{m+1}<t$ and $k(X^N_{\zeta _m})\ge1$, then
$|X^N_{\zeta _{m+1}}-X^N_{\zeta _m}|\geq R$ and so
$$
\frac{R}{(\zeta _{m+1}-\zeta _m)^{\frac{1}{4}}}\leq \frac{|X^N_{\zeta
    _{m+1}}-X^N_{\zeta _m}|}{(\zeta _{m+1}- 
\zeta _m)^{\frac{1}{4}}}\leq \|X^N\|_{\frac14,[0,t]}.
$$
Thus if ${\mathcal M}=\sup\{m:\zeta _{m+1}<t\}$, then
$$
\frac {\mathcal M}2\leq 1+\bigl|\bigl\{m:\,\zeta _{m+1}<t\text{ and } k(X^N_{\zeta _m})\ge1\bigr\}\bigr|
\le1+\frac{(t-s)\|X^N\|_{\frac14,[s,t]}^4}{R^4},$$
which, in conjunction with the preceding, means that
\begin{align*}
|L^N|_t-|L^N|_s&\leq \sum_{m=0}^{{\mathcal M}-1} (|L^N|_{T_{m+1}}-|L^N|_{T_m})+ (|L^N|_t-|L^N|_{T_{\mathcal M}})\\
&\leq \bigl({C\mathcal M}+2\bigr)\|L^N\|_{[s,t]}
\leq C\bigl[(t-s)R^{-4}\|X^n\|_{\frac14,[s,t]}^4+1\bigr]\|L^N\|_{[s,t]}
\end{align*}
\end{proof}

\section{\label{Step3} Associated Martingale\\ and Submartingale Problems}

We know that the sequence of measures $\{\Bbb P^N:\,N\ge0\}$ is on $(X,L,W)$-pathspace.
Our eventual goal is to show that this sequence converges.  Equivalently,
we want to show that all limit points are the same.  In this section we
will show that every limit solves martingale and submartingale
problems, and in the next section we will show that this fact is sufficient
to check that convergence takes place.       

Up until now, we have needed only the assumptions that $\O$ is
bounded and admissible, and $\sigma$ and $b$ are Lipschitz continuous.
However, starting now, we will be assuming that 
$\sigma\in C^2\bigl(\bar{\O};\rm{Hom}(\R^r;\R^d)\bigr)$.  In addition, it
will be convenient to make a change in our notation.  Instead to writing
the equation which determines $(X^N_t,L^N_t)$ (pathwise) as 
\begin{equation}
\label{theequation}
dX^N_t=\sigma(X^N_t)dW^N_t+b(X^N_t)dt+dL^N_t,\quad X^N_0=x_0,
\end{equation}
we will use the equivalent expression 
\begin{equation}
\label{Hormander}
dX^N_t=\sum_{i=1}^r V_i(X^N_t)d(W^N_i)_t+V_0(X^N_t)dt+dL^N_t,\quad X^N_0=x_0
\end{equation}
where $V_i$ is the $i$th column of the matrix $\sigma$ and $V_0=b$.
At the same time, we introduce the vector fields $\tilde
V_i:\bar{\O}\longrightarrow \R^d\times \R^r$ given by $\tilde
V_i=\begin{pmatrix}V_i\\e_i\end{pmatrix}$ for $1\le i\le r$ and $\tilde
V_0=\begin{pmatrix} V_0\\0\end{pmatrix}$, where $\{e_1,\dots,e_r\}$ is the
standard, orthonormal basis in $\R^r$.  Then, $\P^N$-almost surely,
\begin{equation}
\label{Hormander2}
dY_t=\sum_{i=1}^r \tilde{V}_i(X_t)d(W_i)_t+\tilde{V}_0(X_t)dt,\quad Y_0=\left( \begin{array}{c}
x_0\\
0 \end{array} \right)
\end{equation}
where $Y_t=\left( \begin{array}{c}
X_t-L_t\\
W_t \end{array} \right)$.  In keeping with this notation, we use
$D_{V_i}$ and $D_{\tilde V_i}$ to denote the directional derivative
operators on $\R^d$ and $\R^d\times \R^r$ determined, respectively, by $V_i$
and $\tilde V_i$.  Finally, for $\xi \in \R^d$, $T_\xi$ will denote the
translation operator on $C\bigl(\R^d\times \R^r;\R\bigr)$ given by $T_\xi\varphi 
(x,y)=\varphi (x-\xi ,y)$.  

\begin{thm}
\label{TheoremMartingale}
Let $\P$ be any limit point of the sequence $\{{\Bbb P}^N:\,N\ge0\}$.
Then for all $h\in C^2_{\rm b}(\R^{d}\times \R^r;\R)$,
\begin{equation}
\label{MartingaleProblem}
h(Y_t)-\int_0^t\left(\frac{1}{2}\sum_{i=1}^r\bigl[D^2_{\tilde{V}_i}T_{L_s}h\bigr](X_s,W_s) +\bigl[
D_{\tilde{V}_0}T_{L_s}h\bigr](X_s,W_s)\right)ds
\end{equation}
is a $\P$-martingale relative to the filtration $\{{\mathcal B}_t:\,t\ge0\}$
generated by the paths in the $(X,L,W)$-pathspace.  
Also, for all $f\in C^2_{\rm b}(\R^d;\R)$ satisfying $\frac{\partial f}{\partial
  \nu}(x)\geq 0$ for every $x\in\partial\O$ and $\nu\in\nu(x)$,
\begin{equation}
\label{SubMartingaleProblem}
f(X_t)-f(x_0)-\int_0^t\left(\frac{1}{2}\sum_{i=1}^rD^2_{V_i}f(X_s) +D_{V_0}f(X_s)\right)ds
\end{equation}
is a $\P$-sub-martingale relative to the filtration $\{{\mathcal B}_t:\,t\ge0\}$.  
\end{thm}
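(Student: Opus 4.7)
The plan is to pass to the weak limit in a Wong--Zakai style Taylor expansion performed at the approximating level, with error control supplied by the moment estimates of Section~\ref{Step2}. For fixed $h\in C^2_{\rm b}(\R^d\times\R^r;\R)$, set
$$
M^N_t\;=\;h(Y^N_t)-h(Y^N_0)-\int_0^t\Bigl[\tfrac12\textstyle\sum_i D^2_{\tilde V_i}T_{L^N_s}h+D_{\tilde V_0}T_{L^N_s}h\Bigr](X^N_s,W^N_s)\,ds.
$$
Since $W^N$ is piecewise linear, the classical chain rule yields $h(Y^N_t)-h(Y^N_0)=\int_0^t D_{\tilde V_i}h(Y^N_s)\dot W^N_{i,s}\,ds+\int_0^t D_{\tilde V_0}h(Y^N_s)\,ds$; using the identity $h(Y^N_s)=T_{L^N_s}h(X^N_s,W^N_s)$, the second integral already equals the $D_{\tilde V_0}T_{L^N_s}h$ drift piece in $M^N_t$, so the work is to process the first.

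On each dyadic subinterval $[t_k,t_{k+1}]$ with $t_k=k2^{-N}$, I would expand $D_{\tilde V_i}h(Y^N_s)$ in $s$ around $s=t_k$ using $\dot Y^N_s=2^N\sum_j\tilde V_j(X^N_s)\Delta W^N_{j,k}+\tilde V_0(X^N_s)$, multiply by $\dot W^N_{i,s}=2^N\Delta W^N_{i,k}$, and integrate. To leading order this produces $D_{\tilde V_i}h(Y^N_{t_k})\Delta W^N_{i,k}$; the first correction produces $\tfrac12\sum_j D_{\tilde V_j}D_{\tilde V_i}h(Y^N_{t_k})\Delta W^N_{j,k}\Delta W^N_{i,k}$, together with a drift piece of size $2^{-N}\Delta W^N_{i,k}$. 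Invoking $\bE[\Delta W^N_{i,k}\Delta W^N_{j,k}\mid\F_{t_k}]=\delta_{ij}2^{-N}$, the conditional mean of the second-order sum over $k$ matches $\int_0^t\tfrac12\sum_i D^2_{\tilde V_i}T_{L^N_s}h(X^N_s,W^N_s)\,ds$. Thus
$$
M^N_t\;=\;\sum_{i=1}^r\sum_{k:\,t_{k+1}\le t}D_{\tilde V_i}h(Y^N_{t_k})\Delta W^N_{i,k}\;+\;E^N_t,
$$
with $E^N_t$ collecting Taylor remainders and the martingale-difference fluctuation of the second-order sum about its conditional mean.

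To conclude that the limit $M_t$ is a $\P$-martingale, it suffices to prove $\bE^{\P^N}[(M^N_t-M^N_s)\Phi]\to 0$ for each $0\le s<t$ and each bounded continuous $\mathcal B_s$-measurable $\Phi$ on pathspace. The principal sum is a $\{\F_{t_k}\}$-discrete martingale and so contributes zero after replacing $\Phi$ by $\bE[\Phi\mid\F_{\ceiln{s}}]$, harmless because $\ceiln{s}\searrow s$ and $\Phi(X^N,L^N,W^N)$ is itself $\F_{\ceiln{s}}$-measurable under $\P^N$. The $L^1$-decay of $E^N_t$ reduces to mean bounds for $\int_s^t[\tilde V_i(X^N_u)-\tilde V_i(X^N_{\floor{u}})]\,dW^N_{i,u}$ and $\int_s^t(u-\floor{u})\,d|W^N|_u$, supplied by (\ref{BVbound}), (\ref{crazylemma}), and (\ref{crazylemma2}) combined with the H\"older control of Theorem~\ref{Holder}. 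The submartingale assertion is entirely parallel with $f$ in place of $h$, except that the ODE chain rule for $f(X^N_\cdot)$ now picks up the extra term $\int_0^t Df(X^N_s)\cdot\nu(X^N_s)\,d|L^N|_s\ge 0$ from the boundary pushing, by the hypothesis $\partial f/\partial\nu\ge 0$ on $\partial\O$; hence the analogous $M^N_t$ decomposes as discrete martingale plus non-negative boundary contribution plus negligible error, and the same weak-limit passage yields a submartingale.

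The main obstacle is the uniform-in-$N$ $L^1$-control of $E^N_t$. Once the dyadic estimates of Section~\ref{Step2} are assembled, this is essentially bookkeeping; however, the bookkeeping crucially requires Theorem~\ref{thm:mrlemma}, which guarantees that $|L^N|_t$ stays bounded in $L^p$ uniformly in $N$---without this control, the generator integrals defining $M^N_t$ would not survive the weak limit.
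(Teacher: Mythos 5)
Your proposal is correct and follows essentially the same route as the paper: expand $h(Y^N_\cdot)$ by the deterministic chain rule, Taylor-expand on each dyadic block so the first-order term becomes a discrete $\{\F_{t_k}\}$-martingale and the second-order Brownian quadratic contributes the generator via $\bE[\Delta W^N_{i,k}\Delta W^N_{j,k}\mid\F_{t_k}]=\delta_{ij}2^{-N}$, and control the remainder with (\ref{BVbound}), (\ref{crazylemma}), (\ref{crazylemma2}), Theorem \ref{Holder}, and Theorem \ref{thm:mrlemma}. One point worth spelling out in the bookkeeping is that, since the vector fields $\tilde V_i$ are evaluated at $X^N_s$ rather than at the first component of $Y^N_s$, your Taylor expansion of $[D_{\tilde V_i}T_{L^N_s}h](X^N_s,W^N_s)$ in $s$ produces $dL^N$-integrals in addition to the $dW^N$- and $ds$-integrals you list; these are precisely the terms whose $L^1$-decay forces the appeal to Theorem \ref{thm:mrlemma}, as you correctly anticipate.
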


We will begin with the proof of the martingale property for
(\ref{MartingaleProblem}), and, without loss in generality, we will do so
under the assumption that $h$ is smooth and compactly supported.  
What we need to show is that for any limit point $\P$, $0\le s<t$ and bounded, continuous,
${\mathcal B}_s$-measurable $F:C\bigl([0,\infty );\R^d\times\R^d\times 
  \R^r\bigr)\longrightarrow [0,\infty )$, 
\begin{equation}
\label{martingaleincrement}
\Bbb E^{\P}\left[\left(h(Y_t)-h(Y_s)-\int_s^t \tilde\L h(u) du\right)F\right]=0
\end{equation}
where we have used $\tilde \L h(u)$ to denote the integrand in
(\ref{MartingaleProblem}), and clearly it suffices to check this when $s$
and $t$ are $M$-dyadic rationals for some $M\in{\Bbb N}$.  Thus, it suffices to show that
\begin{equation}
\label{MartingaleGoal}
\bE^{{\Bbb P}^N}\left[\left(h(Y_t)-h(Y_s)-\int_s^t \tilde\L h(u) du\right)F\right]\rightarrow 0
\end{equation}
for $M$-dyadic $s$ and $t$ and bounded, ${\mathcal B}_s$-measurable
$F\in C\bigl([0,\infty );\R^d\times \R^d\times \R^r\bigr)$.  

For $N\ge M$, write
$$
h(Y_t)-h(Y_s)=\sum_{m=2^Ns}^{2^Nt-1} h(Y_{(m+1)2^{-N}})-h(Y_{m2^{-N}}),
$$
and, for each term in the sum, use (\ref{Hormander2}) to see that,
$\P^N$-almost surely,
\begin{align*}
h(Y_{(m+1)2^{-N}})-h(Y_{m2^{-N}})=&\int_{m2^{-N}}^{(m+1)2^{-N}}\sum_{i=1}^r
  \bigl[D_{\tilde{V}_i}T_{L_\tau }h\bigr](X_\tau,W_\tau )(\dot{W}_{i,m})d\tau
\\&\qquad+\int_{m2^{-N}}^{(m+1)2^{-N}}
  \bigl[D_{\tilde{V}_0}T_{L_\tau }h\bigr](X_\tau,W_\tau )d\tau, \end{align*}
where $\dot W_{i,m}\equiv 2^N\bigl(W_i((m+1)2^{-N})-W_i(m2^{-N})\bigr)$.  

Since 
$$\sum_{m=2^Ns}^{2^Nt}\int_{m2^{-N}}^{(m+1)2^{-N}}
  \bigl[D_{\tilde{V}_0}T_{L_\tau }h\bigr](X_\tau,W_\tau )d\tau=\int_s^t
  \bigl[D_{\tilde{V}_0}T_{L_\tau }h\bigr](X_\tau,W_\tau )d\tau,$$
the second term on the right causes no problem.

To handle the first term, note that
\begin{align*}
\bigl[D_{\tilde V_i}T_{L_\tau }h\bigr](X_\tau ,W_\tau )=&\bigl[D_{\tilde
    V_i}T_{L_{m2^{-N}}}h\bigr](X_\tau ,W_\tau ) \\&-\sum_{k=1}^d
\int_{m2^{-N}}^\tau\bigl[D_{\tilde V_i}T_{L_\sigma }\partial _{x_k}h\bigr](X_\tau ,W_\tau )\,dL_\sigma .
\end{align*}
Since the second term on the right is dominated by a constant times
$|L|_{(m+1)2^{-N}}-|L|_{m2^{-N}}$, we see that
\begin{align*}
\bE^{\P^N}&\left[\left|\sum_{m=2^Ns}^{2^Nt}\left(\int_{m2^{-N}}^{(m+1)2^{-N}}
\Bigl(\bigl[D_{\tilde V_i}T_{L_\tau }h\bigr]-\bigl[D_{\tilde
    V_i}T_{L_{m2^{-N}}}h\bigr]\Bigr)(X_\tau ,W_\tau )\,d\tau \right)\dot
W_{i,m}\right|\right]\\&\le C2^{-\frac
  N4}\bE^{\P^N}\bigl[|L|_t\|W\|_{\frac14,[0,t]}\bigr]\longrightarrow 0\end{align*}
as $N\to\infty $.

Next, use (\ref{Hormander}) to see that
\begin{align*}\bigl[D_{\tilde V_i}&T_{L_{m2^{-N}}}h\bigr](X_\tau,W_\tau )
=\bigl[D_{\tilde V_i}T_{L_{m2^{-N}}}h\bigr](X_{m2^{-N}},W_{m2^{-N}})\\&
\int_{m2^{-N}}^\tau \bigl[D_{\tilde V_0}D_{\tilde V_i}T_{L_{m2^{-N}}}h\bigr](X_\sigma ,W_\sigma )\,d\sigma
+\sum_{k=1}^d\bigl[\partial _{x_k}D_{\tilde V_i}T_{L_{m2^{-N}}}h\bigr](X_\sigma ,W_\sigma )\,dL_\sigma
\\&\hskip.5truein
+\sum_{j=1}^r\dot W_{j,m}\int_{m2^{-N}}^\tau \bigl[D_{\tilde V_j}D_{\tilde V_i}T_{L_{m2^{-N}}}h\bigr]
(X_\sigma ,W_\sigma )\,d\sigma .\end{align*}
Since the conditional $\P^N$-expected value of
$$\dot W_{i,m}\bigl[D_{\tilde V_i}T_{L_{m2^{-N}}}h\bigr](X_{m2^{-N}},W_{m2^{-N}})$$
given ${\mathcal B}_s$ is zero, the first term on the right does not appear in the
computation.  Moreover, 
After integrating the second two terms over
$[m2^{-N},(m+1)2^{-N}]$, multiplying by $\dot W_{i,m}$, and summing from
$m=2^N$ to $m=2^Nt$, one can
easily check that the absolute values of the resulting quantities have
$\P^N$-expected values which tend to $0$ as $N\to\infty $.

Finally, again applying (\ref{Hormander}), one finds that
$$\int_{m2^{-N}}^\tau \bigl[D_{\tilde V_j}D_{\tilde V_i}T_{L_{m2^{-N}}}h\bigr]
(X_\sigma ,W_\sigma )\,d\sigma $$
can be replaced by
$$(\tau -m2^{-N})\bigl[D_{\tilde V_j}D_{\tilde V_i}T_{L_{m2^{-N}}}h\bigr]
(X_{m2^{-N}},W_{m2^{-N}})$$
plus terms which make no contributions in the limit as $N\to\infty $.
Hence, we are left with quantities of the form
$$\sum_{m=2^Ns}^{2^Nt}2^{-2N-1}\dot W_{j,m}\dot W_{i,m}\bigl[D_{\tilde V_j}D_{\tilde V_i}T_{L_{m2^{-N}}}h\bigr]
(X_{m2^{-N}},W_{m2^{-N}}).$$
Since the $\P^N$-conditional expected value of $2^{-2N}\dot W_{j,m}\dot
W_{i,m}$ is $2^{-N}\delta _{i,j}$,
\begin{align*}\bE^{\P^N}&\left[\left(\sum_{m=2^Ns}^{2^Nt}2^{-2N-1}\dot W_{j,m}\dot
  W_{i,m}\bigl[D_{\tilde V_j}D_{\tilde V_i}T_{L_{m2^{-N}}}h\bigr]
(X_{m2^{-N}},W_{m2^{-N}})\right)F\right]\\&
=\frac{\delta _{i,j}}2\bE^{\P^N}\left[2^{-N}\left(\sum_{m=2^Ns}^{(m+1)2^N}
\bigl[D_{\tilde V_j}D_{\tilde V_i}T_{L_{m2^{-N}}}h\bigr]
(X_{m2^{-N}},W_{m2^{-N}})\right)F\right],\end{align*}
which, as $N\to\infty $, has that same limit as
$$\frac{\delta _{i,j}}2\bE^{\P^N}\left[\left(\int_s^t
\bigl[D_{\tilde V_j}D_{\tilde V_i}T_{L_s}h\bigr]
(X_{s},W_{s})\,ds\right)F\right].$$

The proof of (\ref{SubMartingaleProblem}) is similar, but easier, and so we will skip
the details.  The only difference is that when we apply (\ref{Hormander}) to 
the difference $f(X_{(m+1)2^{-N}})-f(X_{m2^{-N}})$, we throw away the
$dL_\tau $ integral since, under our hypotheses, it is non-negative.

%%%%%NEWSECTION

\section{\label{Step4} Convergence}

In this section we complete our program of proving the $\{\P^N:\,N\ge0\}$
converges to the distribution of an appropriate Stratonovich reflected 
SDE.  By the uniqueness result of Lions and Sznitman (Theorem 3.1
of~\cite{LS}) and the tightness which we proved in 3.1, the
convergence will follow as soon as we show that every limit $\P$ is
the distribution of that reflected SDE.

Let $\P$ be any limit of $\{{\Bbb P}^N:\,N\ge0\}$.  
By Theorem \ref{TheoremMartingale}, we know that, for all
$h\in C^2_{\rm b}(\R^{d}\times \R^r;\R)$, 
\begin{eqnarray} \label{martingale}
  h(X_t-L_t,W_t)-h(x_0,0)-\int_0^t\tilde\L h(s) ds\text{ is a }\P\text{
    martingale}, \end{eqnarray}
relative to $\{{\mathcal B}_t:\,t\ge0\}$, where
$$\tilde\L h(s)=\frac12\sum_{i=1}^r\bigl[D_{\tilde
    V_i}^2T_{L_s}h\bigr](X_s,W_s)+\bigl[D_{\tilde
    V_0}T_{L_s}h\bigr](X_s,W_s).$$
Using elementary stochastic calculus, it follows from (\ref{martingale})
that $\{W_t:\,t\ge0\}$ is a $\P$-Brownian motion relative to $\{{\mathcal
  B}_t:\,t\ge0\}$ and that, $\P$-almost surely, 
$$X_t-x_0-\int_0^t
  \left(\frac{1}{2}\sum_{i=1}^r [D_{V_i}V_i](X_s)+V_0(X_s) \right)ds -
  L_t=\int_0^t\sigma(X_s)dW_s, $$
which can be rewritten in Stratonovich form as
\begin{eqnarray}\label{SDE}
X_t-X_0=\sum_{i=1}^r\int_0^t V_i(X_s)\circ dW_s+\int_0^tV_0(X_s)\,ds+L_t.\end{eqnarray}
Thus, the only remaining question is whether $\{L_t:\,t\ge0\}$ has the
required properties.  That is, whether, $\P$-almost surely, $|L|_t<\infty$
and $\int_0^t{\mathbf 1}_\O(X_s)\,d|L|_s=0$ for all $t\ge0$, and
$\frac{dL_t}{d|L|_t}\in \nu (X_t)$ a.e.

Since the local variation norm is a lower semi-continuous function of local
uniform convergence, Theorem \ref{thm:mrlemma} tells us that, $\P$-almost
surely, $L_\cdot$ has locally bounded variation.  In fact, by combining that theorem 
with the estimates in Theorem \ref{Holder}, one sees that, for all $t\ge0$,
$|L|_t$ has finite $\P$-moments of all orders.  

In order to prove the other properties of $L_\cdot$ we will use the second part of Theorem
(\ref{TheoremMartingale}), which says that for every $f\in C^2_{\rm b}(\R^d;\R)$ satisfying
$\frac{\partial f}{\partial \nu}(x)\geq 0$ for all $x\in\partial\O$ and $
\nu\in\nu(x)$, 
\begin{eqnarray} \label{submartingale}
  f(X_t)-\int_0^t\L f(X_s) ds\text{ is a }\P\text{
    sub-martingale} \end{eqnarray}
relative to $\{{\mathcal B}_t:\,t\ge0\}$, where
$$\L f(x) =\frac12\sum_{i=1}^r D_{V_i}^2f(x)+D_{V_0}f(x).$$
Now compare this to what one gets by applying It\^o's formula to
(\ref{SDE}).  Namely, his formula says that if
$\xi ^f_t=\int_0^t\nabla f(X_s)\cdot dL_s$ then
$$f(X_t)-\int_0^t\L f(X_s)-\xi ^f_t\quad\text{is a $\P$-martingale}.$$
Thus, $\xi ^f_\cdot$ is $\P$-almost surely non-decreasing.  Starting from
this observation and using the arguments in Lemmas 2.3 and 2.5
of~\cite{SV}, one can prove the following lemma.

\begin{lem}
\label{xilem} For $f\in C^2_{\rm b}(\R^d;\R)$, define $\xi ^f_\cdot$ as
above.  Then, $\P$-almost surely, $\int_0^\infty {\mathbf
  1}_\O(X_s)\,d|\xi^f|_s=0$.  Moreover, if $\frac{\partial f}{\partial
  \nu}(x)\ge0$ for all $x$ in an open set $U$ and all $\nu \in\nu (x)$,
then, $\P$-almost surely, $t\rightsquigarrow\int_0^t{\mathbf 1}_U(X_s)\,d\xi
^f_s$ is non-decreasing.\end{lem}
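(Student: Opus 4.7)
The starting point is the observation made immediately before the lemma: by comparing the submartingale decomposition from Theorem \ref{TheoremMartingale} with the It\^o decomposition of $f(X_\cdot)$ obtained from the Stratonovich SDE \eqref{SDE}, whenever $f\in C^2_{\rm b}(\R^d;\R)$ satisfies $\partial f/\partial\nu(x)\ge0$ for every $x\in\partial\O$ and every $\nu\in\nu(x)$, the process $\xi^f_\cdot$ is non-decreasing. The plan is to extract both parts of the lemma from this single fact.

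\textbf{Part 1.} First I would apply the observation to $K\phi\pm f$, where $\phi$ is the function of Part 2 of Definition \ref{admissible} satisfying $\nabla\phi\cdot\nu\ge\alpha$ on $\partial\O$. For $K\ge\alpha^{-1}\sup_{\bar\O}|\nabla f|$ both $K\phi\pm f$ satisfy the global boundary hypothesis, so $K\xi^\phi\pm\xi^f$ are non-decreasing; hence $d|\xi^f|\le K\,d\xi^\phi$, and Part 1 reduces to showing $\int_0^\infty\mathbf{1}_\O(X_s)\,d\xi^\phi_s=0$. Now any $g\in C^2_c(\O)$ vanishes in a neighbourhood of $\partial\O$, so $\pm g$ both vacuously satisfy the boundary hypothesis; hence $\pm\xi^g$ are non-decreasing from zero, forcing $\xi^g\equiv 0$. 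I would then pick $\chi_n\in C^2_c(\O)$ with $0\le\chi_n\le 1$ and $\chi_n\uparrow\mathbf{1}_\O$. Since $\phi\chi_n\in C^2_c(\O)$, the preceding gives $\xi^{\phi\chi_n}\equiv 0$ and $\xi^{\chi_n}\equiv 0$; expanding $\nabla(\phi\chi_n)=\chi_n\nabla\phi+\phi\nabla\chi_n$ then yields $\int_0^t\chi_n(X_s)\,d\xi^\phi_s=0$ for every $t$. Monotone convergence for the non-negative measure $d\xi^\phi$ gives $\int_0^t\mathbf{1}_\O(X_s)\,d\xi^\phi_s=0$, and combining with $d|\xi^f|\le K\,d\xi^\phi$ completes Part 1.

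\textbf{Part 2.} Here I would follow the localisation idea of Lemma 2.5 in \cite{SV}. Given $f$ with $\partial f/\partial\nu\ge0$ on an open set $U$ and a non-negative cutoff $\chi\in C^2_c(U)$, pick $\lambda$ large enough that $\chi f+\lambda\phi$ satisfies the global boundary hypothesis --- on $U\cap\partial\O$ the term $\chi\partial f/\partial\nu$ is non-negative and the error $|f\partial\chi/\partial\nu|$ is absorbed by $\lambda\alpha$, while off $U$ both $\chi$ and $\nabla\chi$ vanish, so only $\lambda\phi$ contributes. The identity
\[ \xi^{\chi f+\lambda\phi}_t=\int_0^t\chi(X_s)\,d\xi^f_s+\int_0^t f(X_s)\,d\xi^\chi_s+\lambda\xi^\phi_t, \]
together with the analogous non-decreasing process obtained by replacing $\chi f$ with $-\chi f$ and choosing a new (larger) $\lambda$, and together with Part 1 (which locates the variation of $d\xi^\chi$ on $\{X_s\in U\cap\partial\O\}$), should allow one to pin down the sign of $\int_0^t\chi(X_s)\,d\xi^f_s$ via a Radon--Nikodym comparison with $d\xi^\phi$. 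Passing to the limit $\chi_n\uparrow\mathbf{1}_U$ with $\chi_n\in C^2_c(U)$ then gives the monotonicity claim.

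\textbf{Main obstacle.} The delicate part of the plan is Part 2: the cutoff introduces a cross-term $f\nabla\chi\cdot dL$ whose sign on $U\cap\partial\O$ is not controlled by the local hypothesis on $f$, and Part 1 only removes its interior contribution. Extracting the required sign from the non-decreasing processes by playing $+\chi f$ and $-\chi f$ off against each other, combined with the Radon--Nikodym step, is where the careful $\pm$ bookkeeping from \cite{SV} becomes essential and where the real work of the proof lies.
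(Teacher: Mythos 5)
Your Part~1 is correct and supplies the details the paper defers to~\cite{SV} for. Both moves are sound: applying the non-decreasing observation to $K\phi\pm f$ gives $d|\xi^f|\le K\,d\xi^\phi$, and the product-rule identity $0=\xi^{\phi\chi_n}_t=\int_0^t\chi_n(X_s)\,d\xi^\phi_s+\int_0^t\phi(X_s)\,d\xi^{\chi_n}_s$, combined with $\xi^g\equiv 0$ for every $g\in C^2_c(\O)$ and monotone convergence, gives $\int_0^\infty{\mathbf 1}_\O(X_s)\,d\xi^\phi_s=0$.

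Part~2, however, contains a genuine gap, which you flag yourself. Comparing the two non-decreasing processes $\xi^{\chi f+\lambda\phi}$ and $\xi^{-\chi f+\lambda'\phi}$ yields only the two-sided domination $-\lambda\,d\xi^\phi\le d\xi^{\chi f}\le\lambda'\,d\xi^\phi$, which is already a consequence of Part~1 applied to $\chi f$, and says nothing about the sign of $\int_0^t\chi(X_s)\,d\xi^f_s$. Notice that $\lambda\phi$ is added symmetrically to both $+\chi f$ and $-\chi f$, so the scheme cannot distinguish $\chi f$ from $-\chi f$; the asymmetric local hypothesis $\partial f/\partial\nu\ge0$ on $U$ is in fact never exploited. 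The obstruction is the cross term $\int_0^t f(X_s)\,d\xi^\chi_s$, whose mass, by Part~1, sits on $\{X_s\in U\cap\partial\O\}$, exactly where its sign is uncontrolled, and no choice of $\lambda$ cancels it. Some genuinely local mechanism is required --- for example, a time-localization via stopping times for the entry to and exit from compact subsets of $U$, chosen so that the cutoff's gradient is never seen along $X_\cdot$, or a localized form of the submartingale property (\ref{SubMartingaleProblem}) obtained by rerunning the approximation argument of Theorem~\ref{TheoremMartingale} only over dyadic intervals during which $X^N$ stays inside a compact subset of $U$. The proposal as written does not supply such an ingredient, so Part~2 remains incomplete.
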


Because $e_i\cdot L_\cdot=\xi ^{x_i}_\cdot$, it is obvious from the first
part of Lemma \ref{xilem} that $\int_0^\infty {\mathbf 1}_\O(X_s)\,d|L|_s=0$
$\P$-almost surely, and so all that we have to do is show that, $\P$-almost
surely, $\frac{dL_t}{d|L|_t}\in\nu (X_t)$ a.e.  To this end, let $\phi$ be
the function in Part 2 of Definition \ref{admissible}, and define
$$a^f(x)=\inf_{\nu \in\nu (x)}\frac{\frac{\partial f}{\partial \nu}(x)}{\frac{\partial \phi}
{\partial \nu}(x)}\text{ and }b^f(x)=
\sup_{\nu \in\nu (x)}\frac{\frac{\partial f}{\partial
    \nu}(x)}{\frac{\partial \phi}{\partial \nu}(x)}$$
for $x\in\partial \O$.

\begin{lem}\label{nuxlem}  If $\{x_n:\,n\ge1\}\subseteq\partial \O$,
  $\nu _n\in\nu (x_n)$ for each $n\ge1$, and $(x_n,\nu _n)\longrightarrow
  (x,\nu )$ in $\partial \O\times \Bbb S^{N-1}$, then $\nu \in\nu(x)$.  In
  particular, for each $f\in C^2_{\rm b}(\R^d;\R)$, $a^f$ is lower
  semicontinuous and $b^f$ is upper semicontinuous on $\partial \O$.  
  Furthermore, if $(x,\ell)\in\partial \O\times \Bbb S^{N-1}$ and there
exists a $\beta \ge0$ such that
$\nabla f(x)\cdot\ell\ge\beta a^f(x)$ for a set $S$ of $f\in C^2_{\rm
  b}(\R^d;\R)$ with the property that $\{\nabla f(x):\,f\in S\}$ is dense
in $\R^d$, then $\ell\in\nu (x)$.\end{lem}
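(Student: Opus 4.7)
My plan is to handle the three assertions in order, leaning throughout on the convex-analytic identities collected in Lemma \ref{convexfacts}. The graph-closure statement is essentially immediate: if $\nu_n\in\nu(x_n)$ with $(x_n,\nu_n)\to(x,\nu)$, then part 1 of Lemma \ref{convexfacts} gives $-\nu_n\in N^p_{\bar\O}(x_n)$, part 2 yields $-\nu\in N^p_{\bar\O}(x)$ in the limit, and since $|\nu|=1$, part 1 again delivers $\nu\in\nu(x)$.

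For the semicontinuity statement, note that the preceding part shows $\nu(y)\subseteq\S^{d-1}$ is closed (hence compact), while Part 2 of Definition \ref{admissible} gives $\nabla\phi(y)\cdot\nu\ge\alpha>0$ uniformly in $\nu\in\nu(y)$. Consequently the ratio $\frac{\nabla f(y)\cdot\nu}{\nabla\phi(y)\cdot\nu}$ is a jointly continuous function of $(y,\nu)$ on the closed set $\{(y,\nu):y\in\partial\O,\ \nu\in\nu(y)\}$, and the infimum defining $a^f(y)$ is actually attained. To prove lower semicontinuity at $x$, I would take $x_n\to x$ along a sequence realizing $\liminf_n a^f(x_n)$, select minimizers $\nu_n\in\nu(x_n)$, and extract a subsequential limit $\nu\in\nu(x)$; passing to the limit gives $a^f(x)\le\frac{\nabla f(x)\cdot\nu}{\nabla\phi(x)\cdot\nu}=\liminf_n a^f(x_n)$. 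The same argument with maximizers shows $b^f$ is upper semicontinuous.

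For the final assertion, the key observation is that $a^f(x)$ depends on $f$ only through $v:=\nabla f(x)$; write $a^v(x):=\inf_{\nu\in\nu(x)}\frac{v\cdot\nu}{\nabla\phi(x)\cdot\nu}$. As an infimum of linear functionals of $v$, this is concave in $v$, and it is continuous on $\R^d$ because $\nu(x)$ is compact and the denominator stays $\ge\alpha$. The hypothesis furnishes $v\cdot\ell\ge\beta a^v(x)$ for $v$ in a dense subset of $\R^d$, so by continuity it persists for every $v\in\R^d$. Now I specialize $v=w$ for arbitrary $w\in T_{\bar\O}(x)$: parts 1 and 4 of Lemma \ref{convexfacts} give $-\nu(x)\subseteq N^p_{\bar\O}(x)=T_{\bar\O}(x)^*$, so $w\cdot\nu\ge 0$ for every $\nu\in\nu(x)$, whence $a^w(x)\ge 0$ and therefore $w\cdot\ell\ge 0$. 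This shows $-\ell\in T_{\bar\O}(x)^*=N^p_{\bar\O}(x)$, and since $|\ell|=1$, part 1 of Lemma \ref{convexfacts} concludes $\ell\in\nu(x)$. The principal conceptual step is representing $v\mapsto a^v(x)$ as an infimum of linear functionals, which secures the concavity and continuity needed to extend the density hypothesis from $\{\nabla f(x):f\in S\}$ to all of $\R^d$; the rest is direct manipulation of the identities in Lemma \ref{convexfacts}.
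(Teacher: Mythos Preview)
Your argument is correct. Parts 1 and 2 match the paper's proof essentially line for line: graph closure via the uniform constant $C_0$ in Part~1 of Definition~\ref{admissible} (which you route through Lemma~\ref{convexfacts}), and semicontinuity by selecting minimizers $\nu_n\in\nu(x_n)$, extracting a subsequential limit, and invoking the graph closure just established.

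For the final assertion you take a genuinely different route. The paper, after extending the hypothesis from the dense set $\{\nabla f(x):f\in S\}$ to all $v\in\R^d$, specializes to $v=x'-x$ for $x'\in\O$ and appeals directly to Part~1 of Definition~\ref{admissible}: since $(x'-x)\cdot\nu\ge -C_0|x'-x|^2$ and $\nabla\phi(x)\cdot\nu\ge\alpha$, one gets $(x'-x)\cdot\ell\ge -\tfrac{\beta C_0}{\alpha}|x'-x|^2$, which is precisely the defining inequality~(\ref{defnofnu}) for $\ell\in\nu(x)$. You instead specialize to $v=w\in T_{\bar\O}(x)$ and use the duality $N^p_{\bar\O}(x)=T_{\bar\O}(x)^*$ from Lemma~\ref{convexfacts} to conclude $-\ell\in N^p_{\bar\O}(x)$. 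Your path is more in the spirit of the convex-analytic framework already set up in \S\ref{2}, and your extension step---observing that $v\mapsto a^v(x)$ is $1/\alpha$-Lipschitz as an infimum of uniformly Lipschitz linear functionals---is cleaner and more explicit than the paper's, which leaves the density passage implicit. The paper's endpoint, on the other hand, is slightly more elementary in that it verifies membership in $\nu(x)$ straight from the definition without invoking the polar-cone identity.
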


\begin{proof}  The initial assertion is an easy consequence of Parts 1 and 2
  of Definition \ref{admissible}.  Next, suppose that
  $x_n\longrightarrow x$ in $\partial \O$.  Because, by the first
  assertion, $\nu (y)$ is compact for each $y\in\partial \O$, for each
  $n\ge1$ there is a $\nu _n\in\nu(x_n)$ such that $a_f(x_n)=\frac{\nabla f(x_n)\cdot\nu _n}
{\nabla \phi(x_n)\cdot\nu _n}$.  Now choose a subsequence
$\{x_{n_m}:\,m\ge1\}$ so that $\varliminf_{n\to\infty
}a^f(x_n)=\lim_{m\to\infty }a^f(x_{n_m})$ and $\nu _{n_m}\longrightarrow
\nu $ in $\Bbb S^{N-1}$.  Then
$\nu \in\nu(x)$ and so
$$a^f(x)\le\frac{\nabla f(x)\cdot\nu }
{\nabla \phi(x)\cdot\nu }\le\liminf_{n\to\infty }a^f(x_n).$$
The same argument shows that $b^f$ is upper semicontinuous.

Next, let
$(x,\ell)$ and $\beta $ be as in the final assertion.  Then, by
Part 2 of Definition \ref{admissible}.  By taking $f$ to be linear in a
neighborhood of $\bar\O$, one sees that for every $v\in\R^d$ there exists a
$\nu\in\nu(x)$ such that $v\cdot \ell\ge\beta \frac{v\cdot\nu}{\nabla
  \phi(x)\cdot\nu}$.  Hence, for each $x'\in\O$ there is a $\nu\in\nu(x)$
such that
$$(x'-x)\cdot\ell\ge\beta \frac{(x'-x)\cdot\nu }{\nabla\phi(x)\cdot\nu }\ge
-\frac{\beta C_0}\alpha |x'-x|^2,$$
which, by (\ref{elementaryalg}), means that $\ell\in\nu(x)$.
\end{proof}

\begin{lem} \label{helperlemma} For each $f\in C^2_{\rm b}(\R^d;\R)$,
  $\P$-almost surely $d\xi^f_\cdot$ is
  absolutely continuous with respect to $d\xi^\phi_\cdot$ and $a^f(X_t)\le\frac{d\xi ^f_\cdot}{d
\xi ^\phi_\cdot}(t)\le b^f(X_t)$ for $d\xi ^\phi_\cdot$-almost every $t\ge0$.
\end{lem}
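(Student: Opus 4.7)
The plan is to establish the bound $d|\xi^f| \leq M\,d\xi^\phi$ globally for a suitable constant $M$ (which gives absolute continuity) and then to pin down the Radon–Nikodym derivative $\rho = d\xi^f/d\xi^\phi$ pointwise by localizing with Lemma \ref{xilem} applied to well-chosen test functions, using the semicontinuity properties from Lemma \ref{nuxlem}.

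\textbf{Global comparison.} Set $M = \|\nabla f\|_\infty/\alpha$, where $\alpha$ is the constant in Part 2 of Definition \ref{admissible}. For every $x\in\partial\O$ and $\nu\in\nu(x)$,
$$\nabla(M\phi\pm f)(x)\cdot\nu \geq M\alpha - \|\nabla f\|_\infty = 0,$$
so both $M\phi\pm f$ fall under the submartingale statement of Theorem \ref{TheoremMartingale}, and Itô's formula (as in the paragraph preceding Lemma \ref{xilem}) shows that $M\xi^\phi_\cdot\pm\xi^f_\cdot$ are each $\P$-a.s.\ non-decreasing. Hence $|\xi^f_t-\xi^f_s|\leq M(\xi^\phi_t-\xi^\phi_s)$ for $0\leq s\leq t$, so $d\xi^f\ll d\xi^\phi$ and $\rho=d\xi^f/d\xi^\phi$ exists with $|\rho|\leq M$.

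\textbf{Local upper bound.} Fix a rational $c$. By Lemma \ref{nuxlem}, $\{x\in\partial\O: b^f(x)<c\}$ is relatively open in $\partial\O$, so by Lindelöf there is a countable family of open balls $\{U_n\}$ in $\R^d$ whose union covers this set and with $b^f<c$ on each $U_n\cap\partial\O$. For $x\in U_n\cap\partial\O$ and $\nu\in\nu(x)$,
$$\nabla(c\phi-f)(x)\cdot\nu = \bigl(c - \tfrac{\nabla f(x)\cdot\nu}{\nabla\phi(x)\cdot\nu}\bigr)\nabla\phi(x)\cdot\nu \geq (c-b^f(x))\nabla\phi(x)\cdot\nu \geq 0.$$
Lemma \ref{xilem} applied to $g=c\phi-f$ on $U_n$ then says that
$$t\longmapsto \int_0^t\mathbf{1}_{U_n}(X_s)(c-\rho(s))\,d\xi^\phi_s$$
is $\P$-a.s.\ non-decreasing, which forces $\rho(s)\leq c$ for $d\xi^\phi$-a.e.\ $s$ with $X_s\in U_n$. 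Unioning over $n$ yields $\rho(s)\leq c$ for $d\xi^\phi$-a.e.\ $s$ with $b^f(X_s)<c$. Taking the union over rational $c$ of the resulting $\P$-null exceptional sets and then the infimum over rationals $c>b^f(X_s)$ produces $\rho(s)\leq b^f(X_s)$ for $d\xi^\phi$-a.e.\ $s$, $\P$-a.s.

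\textbf{Local lower bound.} The inequality $\rho(s)\geq a^f(X_s)$ follows by the symmetric argument: use lower semicontinuity of $a^f$ (Lemma \ref{nuxlem}) to cover $\{a^f>c\}\cap\partial\O$ by open balls, on each of which $\nabla(f-c\phi)\cdot\nu\geq (a^f(x)-c)\nabla\phi(x)\cdot\nu\geq 0$, apply Lemma \ref{xilem} to $f-c\phi$, and pass to the supremum over rationals $c<a^f(X_s)$.

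The main obstacle is bookkeeping rather than depth: one must choose the neighborhoods $U_n$ open in $\R^d$ (as required by Lemma \ref{xilem}) while controlling $b^f$ and $a^f$ only where they are defined, on $\partial\O$, and then reconcile the countable-rational reduction with the abstract Radon–Nikodym derivative. Both fit together cleanly because $\nu(x)$ is empty at interior points so the sign conditions are vacuous there, and because $|\rho|\leq M$ lets us ignore behavior on the set where $X_s\in\O$ (which carries no $\xi^\phi$-mass by the first part of Lemma \ref{xilem}).
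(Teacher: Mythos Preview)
Your proof is correct and follows essentially the same approach as the paper: establish absolute continuity by comparing $f$ globally with a large multiple of $\phi$, then localize using Lemma \ref{xilem} together with the semicontinuity of $a^f$ and $b^f$ from Lemma \ref{nuxlem}. The only cosmetic difference is in the bookkeeping of the localization---the paper covers all of $\partial\O$ by balls of shrinking radius $1/n$ and uses the constants $a^f(x_{k,n})-\tfrac1n$ at the centers, whereas you fix rational levels $c$ and cover the sublevel/superlevel sets $\{b^f<c\}$ and $\{a^f>c\}$; both schemes reduce to countably many applications of Lemma \ref{xilem} and yield the same conclusion.
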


\begin{proof}  First observe that $f\rightsquigarrow \xi ^f_\cdot$ is linear.
Now choose $\lambda >0$ so that $\nabla(\lambda \phi-f)(x)\cdot\nu\ge0$ for
all $x\in\partial \O$ and $\nu\in\nu(x)$.  Then, $\xi ^{\lambda
  \phi-f}_\cdot=\lambda \xi ^\phi_\cdot- \xi ^f_\cdot$ is
$\P$-almost surely non-decreasing, which proves that $d\xi ^f_\cdot\ll d\xi ^\phi_\cdot$ and
that $\frac{d\xi ^f_\cdot}{d\xi ^\phi_\cdot}\le\lambda $ $\P$-almost surely.

The proof that, $\P$-almost surely, $\alpha (t)\equiv\frac{d\xi ^f_\cdot}{d\xi ^\phi_\cdot}(t)$
lies between $a^f(X_t)$ and $b^f(X_t)$ for $d\xi ^\phi_\cdot$-almost every
$t\ge0$ is a simple localization of the preceding.  For example, to prove
the lower bound, use the lower semicontinuity of $a^f$ to choose, for each
$n\ge1$, a finite cover of $\partial \O$ by open balls
$B(x_{k,n},r_n),\;1\le k\le k_n$ such that 
$x_{k,n}\in\partial \O$, $r_n\le\frac1n$, and $a^f(y)\ge a^f(x_{k,n})-\frac
1n$ for all $y\in B(x_{k,n},r_n)\cap\partial \O$.  Then
$$\frac{\partial f}{\partial \nu}
(y)\ge\bigl(a^f(x_{k,n})-\tfrac1n\bigr)\frac{\partial \phi}{\partial \nu
}(y) \text{ for all }1\le k\le k_n,\;y\in B(x_{k,n},r_n),\text{ and } \nu
\in\nu(y).$$
Now let $\mu $ be the Borel measure on $C\bigl([0,\infty );\R^d\times
  \R^d\times \R^r\bigr)\times [0,\infty )$ determined by
$$\mu \bigl(\Gamma\times [a,b]\bigr)=\bE^\P\bigl[\xi ^\phi(b)-\xi
      ^\phi(a),\,\Gamma \bigr]$$
for all Borel subsets $\Gamma $ of $C\bigl([0,\infty );\R^d\times
  \R^d\times \R^r\bigr)$ and all $a<b$.  
Then, by Lemma \ref{xilem}, we can find a Borel measurable set $A\subseteq
C\bigl([0,\infty );\R^d\times \R^d\times \R^r\bigr)\times [0,\infty )$ 
whose complement has $\mu $-measure $0$ and on which both
$$X_\cdot\in\partial \O\text{ and }
{\mathbf 1}_{B(x_{k,n},r_n)}(X_\cdot)\bigl(a^f(x_{k,n})-\tfrac1n\bigr)\le
{\mathbf 1}_{B(x_{k,n},r_n)}(X_\cdot)\frac{d\xi ^f_\cdot}{d\xi ^\phi_\cdot}$$
hold for all $n\ge1$ and $1\le k\le k_n$.  Hence,
again by the lower semicontinuity of $a^f$, we see that $\frac{d\xi
  ^f_\cdot}{d\xi ^\phi_\cdot}\ge a(X_\cdot)$.
The proof of the upper bound is the same.\end{proof}

\begin{thm}\label{convergencethm}  Let $\P^N$ be the distribution of
  $(X^N,L^N)$ under Wiener measure.  Then $\{\P^N:\,N\ge0\}$ converges to
  the distribution $\P$ of the solution to the reflected stochastic
  differential equation (\ref{SDE}).\end{thm}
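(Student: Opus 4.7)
The plan is the standard three-step strategy: tightness, identification of every weak subsequential limit as a solution of the reflected Stratonovich SDE, and uniqueness. Tightness of $\{\P^N\}$ on the $(X,L,W)$-pathspace was established in Section \ref{Step2} via Theorem \ref{Holder}, so every subsequence admits a further weakly convergent one. By the Lions--Sznitman uniqueness theorem (Theorem 3.1 of \cite{LS}), once I show that each limit point $\P$ is the law of the (unique) solution to the reflected SDE, the full sequence must converge to it.

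Fix a weak limit point $\P$. I would first invoke Theorem \ref{TheoremMartingale}: the martingale identity (\ref{MartingaleProblem}) combined with elementary stochastic calculus, as sketched in the paragraphs around (\ref{martingale}), identifies $\{W_t\}$ as a $\P$-Brownian motion relative to $\{{\mathcal B}_t\}$ and yields the Stratonovich equation (\ref{SDE}) for $X_\cdot$. What remains is to verify that $L_\cdot$ is a reflection process: (i) $|L|_t<\infty$ for every $t$; (ii) $\int_0^t{\mathbf 1}_\O(X_s)\,d|L|_s=0$; and (iii) $\frac{dL_t}{d|L|_t}\in\nu(X_t)$ for $d|L|_t$-a.e.\ $t$. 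Claim (i) drops out of Theorem \ref{thm:mrlemma} combined with lower semicontinuity of total variation under uniform convergence and the H\"older moment bounds of Theorem \ref{Holder}. Claim (ii) follows from the first part of Lemma \ref{xilem} applied to the coordinate functions $f(x)=x_i$, since $e_i\cdot L_\cdot=\xi^{x_i}_\cdot$ and $d|L|_\cdot$ is dominated by $\sum_i d|\xi^{x_i}|_\cdot$.

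The only genuine obstacle is (iii). Setting $\ell(t)=\frac{dL_t}{d|L|_t}$ and $\beta(t)=\nabla\phi(X_t)\cdot\ell(t)$, one has $d\xi^\phi_t=\beta(t)\,d|L|_t$ and $d\xi^f_t=(\nabla f(X_t)\cdot\ell(t))\,d|L|_t$ for any $f\in C^2_{\rm b}(\R^d;\R)$. I would choose a countable collection $S$ of linear $f$'s whose gradients are dense in $\R^d$, apply Lemma \ref{helperlemma} to each, and intersect the resulting $\P$-full-measure sets to obtain a single event on which, simultaneously for every $f\in S$, $d\xi^f\ll d\xi^\phi$ with density in $[a^f(X_\cdot),b^f(X_\cdot)]$. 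A short auxiliary step rules out $\beta(t)=0$ on a $d|L|$-positive set: there $d\xi^\phi$ vanishes, forcing $\nabla f(X_t)\cdot\ell(t)=0$ for every $f\in S$, which by density of the gradients would contradict $|\ell(t)|=1$. Hence $d|L|_\cdot\sim d\xi^\phi_\cdot$ on the support of $L_\cdot$, and the density bound rearranges to $\nabla f(X_t)\cdot\ell(t)\ge\beta(t)\,a^f(X_t)$, $d|L|$-a.e.\ $t$. Applying the final assertion of Lemma \ref{nuxlem} pointwise in $t$, with the nonnegative scalar $\beta(t)$, concludes $\ell(t)\in\nu(X_t)$. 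This identifies $\P$ with the law of a solution of (\ref{SDE}), and Lions--Sznitman uniqueness closes the argument.
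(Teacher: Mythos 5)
Your proposal is correct and follows essentially the same route as the paper: tightness from Theorem \ref{Holder}, identification of every limit via the martingale and submartingale problems of Theorem \ref{TheoremMartingale}, and then the trio of Lemmas \ref{xilem}, \ref{nuxlem}, and \ref{helperlemma} to verify that $L_\cdot$ is a reflection process, closed off by Lions--Sznitman uniqueness. The one cosmetic difference is in establishing $\beta_\cdot>0$ $d|L|_\cdot$-a.e.: the paper deduces $d|L|_\cdot\ll d\xi^\phi_\cdot$ directly from $e_i\cdot L_\cdot=\xi^{x_i}_\cdot$ together with Lemma \ref{helperlemma} and reads positivity of $\beta_\cdot$ off that, whereas you rule out $\beta_\cdot=0$ using density of the gradients $\{\nabla f_v\}$ and the normalization $|\ell_\cdot|=1$; the two orderings are logically equivalent.
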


\begin{proof}  As we said earlier, everything comes down to showing that if
  $\P$ is a limit of $\{\P^N:\,N\ge0\}$ then, $\P$-almost surely
  $\ell_\cdot\equiv\frac{dL_\cdot}{d|L|_\cdot}\in\nu (X_\cdot)$
  $d|L|_\cdot$-almost everywhere.  Thus, because, without loss in generality, we
  may assume that $|\ell_\cdot|\equiv 1$, the second part of Lemma
  \ref{nuxlem} says that it suffices for us to show that, $\P$-almost surely, there exist a
  $\beta _\cdot\ge0$ such that $\nabla f(X_\cdot)\cdot\ell_\cdot\ge\beta _\cdot
  a^f(X_\cdot)$ $d|L|_\cdot$-a.e. for sufficiently many $f$'s.  
To this end, first note that, since $\xi^\phi_\cdot$ is $\P$-almost surely
non-decreasing, $\beta _\cdot\equiv\nabla \phi(X_\cdot)\cdot\ell_\cdot\ge0$
$d|L|_\cdot$-a.e. $\P$-almost surely.  Second, because
$L_\cdot=\sum_{i=1}^d\xi ^{x_i}_\cdot$ $\P$-almost surely, we know that,
$\P$-almost surely, $d|L|_\cdot\ll d\xi ^\phi_\cdot$ and that, for each
$f\in C^2_{\rm b}(\R^d;\R)$, 
\begin{equation}\nabla f(X_\cdot)\cdot\ell_\cdot=\frac{d\xi
  ^f_\cdot}{d|L|_\cdot}=\frac{d\xi ^f_\cdot}{d\xi ^\phi_\cdot}\nabla\phi(X_\cdot)\cdot\ell_\cdot
\ge \beta _\cdot a^f(X_\cdot)\quad d|L|_\cdot\text{-a.e.}\tag{*}\end{equation}
Finally, let $D$ be a countable, dense subset of $\R^d$, and for each $v\in D$
choose $f_v\in C^2_{\rm b}(\R^d;\R)$ so that $f_v(x)=v\cdot x$ in a
neighborhood of $\bar\O$.  Then, $\P$-almost surely, (*) holds
simultaneously with $f=f_v$ for every $v\in D$.
\end{proof}

\begin{rk}  In our derivation of Theorem (\ref{convergencethm}) we used
  (\ref{submartingale}) to show that $L_\cdot$ has the required properties.
  However, using the ideas in Lemma 1.3 of~\cite{LS}, we could have based
  our proof on the fact that the approximating $L^N_\cdot$'s had these
  properties.  Our choice of proof was dictated by two considerations.
  First, it seemed to us to be the simpler one.  Second, and more
  important, it brings up an interesting question.  Namely, does
  (\ref{submartingale}) by itself determine $\P$?  In~\cite{SV} it was
  shown that (\ref{submartingale}) determines $\P$ when $\O$ has a smooth
  boundary and $\L$ is strictly elliptic, even if the coefficients are not
  smooth.  Thus, the question is whether the same result holds when $\O$ is
  only admissible and the coefficients of $\L$ are smooth but may be
  degenerate. 
\end{rk}

%%%%NEWSECTION

\section{\label{OandA}Observations and Applications}

It should be noticed that although the approximating $L^N_\cdot$'s as well as
limit $L_\cdot$ have locally bounded variation, the we cannot replace our
$(X,L,W)$-pathspace with one in which the middle component is the space
of continuous paths of locally bounded variation.  The reason is that
although $L^N_\cdot$ will be absolutely continuous, $L_\cdot$ will not.
Indeed, consider reflected Brownian motion on the halfline $[0,\infty )$.
  In this case $L^N_t=\sup_{0\leq s\leq t}[-W^N_s]$ is piecewise constant
  and therefore absolutely continuous.  On the other hand, $L_t=\sup_{0\leq
    s\leq t}[-W_s]$, which is the local time at $0$ of $W_\cdot$ and as
  such is singular.  

The main application of our result that we consider is the following:
Suppose that for each $N$, the paths $X^N_t$ satisfy a certain geometric
property almost surely and the set $S$ of paths which satisfy this
geometric property is closed in $C([0,\infty);\R^d)$. It then follows that
  the paths of $X_t$ also satisfy this geometric property almost surely
  since

\begin{equation}
\label{limitargument}
\P(S)\geq \limsup_{N\rightarrow\infty}{\P}^N(S)=1
\end{equation}

where, abusing notation, we use $\P^N$ and $\P$ to denote the marginal distributions of
${\P}^N$ and $\P$ on $X$-pathspace. That is, $\P^N(A)={\P}^N(A\times
C([0,\infty);\R^d)\times C([0,\infty);\R^r))$ and $\P(A)=\P(A\times
    C([0,\infty);\R^d)\times C([0,\infty);\R^r))$.  We conclude with several
        examples of the sort of application which we have in mind.

\begin{example}
In $\R^2$, let $\O$ be the rectangle $[-1,1]\times [0,2]$. Fix
$x_0\in\bar{\O}$ and consider the Stratonovich reflected SDE $$
dX_t=\sigma(X_t)\circ dW_t+ dL_t,\quad X_0=x_0, $$ where $\sigma(x)=\left(
\begin{array}{c} x_2\\ -x_1 \end{array} \right)$.  Then
\begin{equation}
  \label{larger} \text{If }|x_0|>1,\quad |X_t|\leq |x_0|\text{ for
  }t>0\quad \P\text{-a.s.}  \end{equation}
and
\begin{equation}
  \label{same} \text{If }|x_0|<1,\quad |X_t|= |x_0|\text{ for }t>0\quad
  \P\text{-a.s.}  \end{equation} \end{example}

\begin{proof} In view of (\ref{limitargument}), it suffices to prove that
  (\ref{larger}) and (\ref{same}) hold $\P^N$-a.s. The distribution of
  $X_{\cdot}$ under $\P^N$ is, in view of Theorem
  \ref{RepresentationTheorem}, the same as the distribution of
  $X^N_{\cdot}$ under $\Wi$, where $X^N_{\cdot}$ solves the ODE $$
  \dot{X}^N_t=\text{{\rm
      proj}}_{T_{\bar{\O}}(X^N_t)}(\sigma(X^N_t)\dot{W}^N_t),\quad
  X^N_0=x_0 $$ It is easy to check that $\forall x\in\bar{\O},a\in\R$,
  $x\cdot\text{{\rm proj}}_{T_{\bar{\O}(x)}}(\sigma(x)a)$ is non-negative
  or non-positive according as $|x|\geq 1$ or $|x|\leq 1$.  Hence, because,
  for each $W_t$, $\frac{d}{dt}\left(|X^N_t|^2\right)=2X^N_t\cdot\text{{\rm
      proj}}_{T_{\bar{\O}(X^N_t)}}(\sigma(X^N_t)\dot{W}^N_t)$ $dt$-a.e.,
  (\ref{larger}) and (\ref{same}) for $X^N_\cdot$ are obvious.  Figure
  \ref{Applications2} shows a sample path of $X^N_t$ under $\Wi$ (to save
  space, we denote the ``intended velocity'' $\sigma(X^N_t)\dot{W}^N_t$ by
  $v_t$ and the ``actual velocity'' $\text{{\rm
      proj}}_{T_{\bar{\O}}(X^N_t)}(\sigma(X^N_t)\dot{W}^N_t)$ by
  $\tilde{v}_t$).

%%%%%%%%%%%%%%%%%%%%%%%%%%%%%%%%%%%%%%%%%%%%%%%%%%%%%%%%
\begin{figure}[!htbp]
\centering
\includegraphics{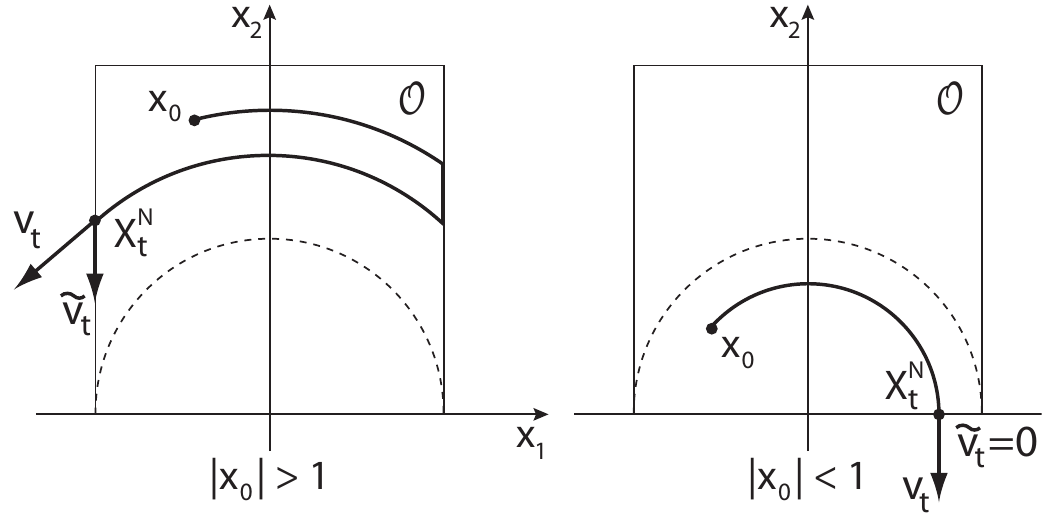}
\caption{\label{Applications2}}
\end{figure}
%%%%%%%%%%%%%%%%%%%%%%%%%%%%%%%%%%%%%%%%%%%%%%%%%%%%%%%
\end{proof}

We next consider coupled reflected Brownian motion, for which we will need
the following lemmas.

\begin{lem} \label{productdomain} Suppose $\O$ is bounded and
  admissible. Then $\O\times\O$ is bounded and admissible as
  well. Furthermore, for each $(x,y)\in\partial(\O\times\O)$, the set of
  normal vectors $\nu(x,y)$ defined by (\ref{defnofnu}) has the
  representation
\begin{multline}
\label{productrep}
\nu(x,y)=\left\{\left( \begin{array}{c}
a_1\nu_x\\
a_2\nu_y \end{array} \right): \nu_x\in\nu(x), \nu_y\in\nu(y),
a_1^2+a_2^2=1, a_1,a_2>0\right\},\\ 
\text{ when }(x,y)\in\partial\O\times\partial\O,
\end{multline}
$$\nu(x,y)=\left\{\left( \begin{array}{c}
\nu_x\\
0 \end{array} \right): \nu_x\in\nu(x)\right\},\text{ when }(x,y)\in\partial\O\times\O,
$$
and
$$
\nu(x,y)=\left\{\left( \begin{array}{c}
0\\
\nu_y \end{array} \right): \nu_y\in\nu(y)\right\},\text{ when }(x,y)\in\O\times\partial\O.
$$
\end{lem}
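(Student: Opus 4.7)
The plan is two-step: first derive the explicit representation of $\nu(x,y)$ in each of the three cases, and then use those formulas to verify the three conditions of Definition \ref{admissible} for $\O\times\O$.

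For the representation, observe that $\partial(\O\times\O)=(\partial\O\times\bar\O)\cup(\bar\O\times\partial\O)$, which partitions into the three cases according to whether $y$ is interior, $x$ is interior, or both lie in $\partial\O$. For a unit candidate $(\nu_1,\nu_2)$, applying (\ref{defnofnu}) to $\O\times\O$ gives
\begin{equation*}
(x'-x)\cdot\nu_1+(y'-y)\cdot\nu_2+C\bigl(|x'-x|^2+|y'-y|^2\bigr)\geq 0\qquad\forall(x',y')\in\bar\O\times\bar\O.
\end{equation*}
Fixing $y'=y$ and letting $x'$ range over $\bar\O$ forces $\nu_1/|\nu_1|\in\nu(x)$ whenever $\nu_1\neq 0$; symmetrically for $\nu_2$. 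When $y\in\O$, taking $x'=x$ and $y'=y+\epsilon v$ for arbitrary unit $v$ and small $\epsilon>0$, then dividing by $\epsilon$ and letting $\epsilon\to 0$, forces $\nu_2=0$ and hence $\nu_1\in\nu(x)$, yielding the $\partial\O\times\O$ formula. The $\O\times\partial\O$ case is symmetric. At corners, the decoupling gives $\nu_i=a_i\nu_i^*$ with $\nu_i^*\in\nu(\cdot)$ and $a_i=|\nu_i|$, so $a_1^2+a_2^2=1$. The converse direction is a direct check: if $\nu_x,\nu_y$ have proximal constants $C_x,C_y$, then $(a_1\nu_x,a_2\nu_y)$ is proximal normal for $\O\times\O$ with constant $\max(C_x,C_y)$.

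With the representation in hand, I would verify admissibility by transporting the data from $\O$. For Part 1, the uniform constant $C_0$ for $\O$ works for $\O\times\O$: writing
\begin{equation*}
((x',y')-(x,y))\cdot\nu+C_0|(x',y')-(x,y)|^2=\bigl[a_1(x'-x)\cdot\nu_x+C_0|x'-x|^2\bigr]+\bigl[a_2(y'-y)\cdot\nu_y+C_0|y'-y|^2\bigr],
\end{equation*}
one observes that each bracket is non-negative because $a_i\leq 1$ implies $C_0\geq a_iC_0$. For Part 2, set $\Phi(x,y)=\phi(x)+\phi(y)$, where $\phi$ witnesses Part 2 for $\O$; then $\nabla\Phi\cdot(a_1\nu_x,a_2\nu_y)=a_1\nabla\phi(x)\cdot\nu_x+a_2\nabla\phi(y)\cdot\nu_y\geq\alpha(a_1+a_2)\geq\alpha$, using the elementary bound $a_1+a_2\geq\sqrt{a_1^2+a_2^2}=1$ on the nonnegative quarter-circle. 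For Part 3, cover $\partial(\O\times\O)$ by products $B(x_i,R)\times B(z_k,R)$, where $\{B(x_i,R)\}$ is the given cover of $\partial\O$ and $\{B(z_k,R)\}$ is a finite cover of the compact set $\bar\O$; the distinguished direction assigned to each product is $(a_i,0)$, $(0,a_j)$, or the normalized diagonal $(a_i+a_j)/\sqrt{2}$, according to whether the product meets $\partial\O\times\O$, $\O\times\partial\O$, or the corner set.

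The main obstacle is the corner case $\partial\O\times\partial\O$: the normal cone there is a genuine one-parameter family on the quarter-circle rather than a singleton, and Part 3 requires a single direction whose inner product with every such normal is bounded below uniformly. The saving grace is the bound $a_1+a_2\geq 1$ on the relevant quarter-circle, which both makes Part 2 immediate via $\Phi=\phi+\phi$ and, with a uniformly chosen diagonal direction, delivers the positive lower bound needed in Part 3.
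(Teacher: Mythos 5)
Your derivation of the representation formulae and the verification of Parts 1 and 2 of Definition \ref{admissible} are sound. For Part 2 you take $\Phi(x,y)=\phi(x)+\phi(y)$, which is actually slightly cleaner than the paper's choice $\Phi(x,y)=\phi(x)\phi(y)$: the paper first has to normalize $\phi\ge1$ so that the product form gives $a_1\phi(y)\alpha+a_2\phi(x)\alpha\ge\alpha(a_1+a_2)$, whereas your sum gives $\nabla\Phi\cdot\nu\ge\alpha(a_1+a_2)\ge\alpha$ directly. Both work.

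The Part 3 argument is where the proposal goes wrong, in two ways. First, it is unnecessary: as the paper remarks immediately after Definition \ref{admissible}, Part 3 is implied by Part 2 whenever the domain is bounded, and the paper's proof of this lemma simply invokes that observation once $\Phi$ is in hand. Second, the direct construction you propose has real gaps. The cover $\{B(x_i,R)\times B(z_k,R)\}$, with $\{B(x_i,R)\}$ covering $\partial\O$ and $\{B(z_k,R)\}$ covering $\bar\O$, covers $\partial\O\times\bar\O$ but misses the remainder of $\partial(\O\times\O)$, namely those $(x,y)\in\O\times\partial\O$ with $x$ bounded away from $\partial\O$ (and hence outside every $B(x_i,R)$); you would need a second family of products $B(z_k,R)\times B(x_j,R)$. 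Moreover, Part 3 asks for a cover by Euclidean balls in $\R^{2d}$, and a product of two $d$-dimensional balls is not such a ball; converting one into the other requires an extra radius adjustment you do not supply. Since you have already established Part 2, the clean route is simply to cite the boundedness of $\O\times\O$.
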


\begin{proof} The representation formulae are a straightforward consequence of
  the definition of inward pointing unit proximal normal vectors in
(\ref{defnofnu}). That $\O\times\O$ satisfies Part 1 of
  Definition \ref{admissible} follows from the representation formulae and
  the fact that $\O$ satisfies Part 1 of Definition \ref{admissible}.

We next show that $\O\times\O$ satisfies Part 2 of Definition
\ref{admissible}. Since $\O$ is bounded, $\phi$ is bounded in $\O$ and so
after adding a constant to $\phi$ if necessary, we may assume that
$\phi\geq 1$ in $\bar{\O}$.

Let $\Phi(x,y)\equiv \phi(x)\phi(y)$. Then for all
$(x,y)\in\partial(\O\times\O),\quad \nu\in\nu(x,y)$, we have, by our
representation formulae, that
\begin{align*}
\nabla\Phi(x,y)\cdot \nu =& a_1\phi(y)\nabla\phi(x)\cdot\nu_x +
a_2\phi(x)\nabla\phi(y)\cdot\nu_y\\ 
                   \geq& a_1 \phi(y)\alpha + a_2\phi(x)\alpha
                   \geq \alpha(a_1+a_2)\geq \alpha
\end{align*}
(where $(a_1,a_2)=(1,0)$ and $(0,1)$ for the cases $(x,y)\in(\partial\O\times\O)\cup(
\O\times\partial\O)$), and so Part 2 holds with the function
$\Phi(x,y)$. Finally, as $\O\times\O$ is bounded, Part 3 follows
immediately from Part 2. 
\end{proof}

\begin{lem}
\label{productlemma}
Let $\O$ be bounded and admissible. Then for $(x,y)\in\bar{\O}\times\bar{\O}$,
$$
T_{\bar{\O}\times\bar{\O}}(x,y)=T_{\bar{\O}}(x)\times T_{\bar{\O}}(y).
$$
Furthermore,
$$
\text{{\rm proj}}_{T_{\bar{\O}\times\bar{\O}}(x,y)}\left( \begin{array}{c}
\xi\\
\eta \end{array} \right)=\left( \begin{array}{c}
\text{{\rm proj}}_{T_{\bar{\O}}(x)}(\xi)\\
\text{{\rm proj}}_{T_{\bar{\O}}(x)}(\eta) \end{array} \right)
$$
\end{lem}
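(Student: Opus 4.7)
The plan is to derive both claims by reducing to structural facts about proximal normal cones and convex projections, leveraging the lemmas already established in this section.

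First I would show that
$$N^p_{\bar\O\times\bar\O}(x,y) = N^p_{\bar\O}(x) \times N^p_{\bar\O}(y),$$
directly from the definition of the proximal normal cone in the excerpt. The forward inclusion is immediate: if $v_x \in N^p_{\bar\O}(x)$ and $v_y \in N^p_{\bar\O}(y)$ with constants $C_x$ and $C_y$, then for all $(x',y') \in \bar\O \times \bar\O$,
$$\bigl((x',y')-(x,y)\bigr)\cdot(v_x, v_y) = (x'-x)\cdot v_x + (y'-y)\cdot v_y \le \max(C_x, C_y)\,|(x',y')-(x,y)|^2.$$
For the converse, test with the pairs $(x',y)$ and $(x,y')$ (both of which lie in $\bar\O\times\bar\O$) to recover the two defining inequalities separately.

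Next, since Lemma \ref{productdomain} gives that $\bar\O \times \bar\O$ is admissible, Lemma \ref{convexfacts} Part 4 applies to both domains, and combining it with the previous step:
$$T_{\bar\O\times\bar\O}(x,y) = N^p_{\bar\O\times\bar\O}(x,y)^* = \bigl(N^p_{\bar\O}(x) \times N^p_{\bar\O}(y)\bigr)^* = N^p_{\bar\O}(x)^* \times N^p_{\bar\O}(y)^* = T_{\bar\O}(x) \times T_{\bar\O}(y),$$
where the third equality is the elementary identity $(A \times B)^* = A^* \times B^*$ for cones in Euclidean spaces (one direction is obvious and the other follows by testing a candidate in the polar against vectors of the form $(v,0)$ and $(0,v)$).

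For the projection formula, the key observation is that $T_{\bar\O}(x)$ and $T_{\bar\O}(y)$ are both convex (Lemma \ref{convexfacts} Part 3), so the product is a convex subset of $\R^{2d}$, and the Euclidean norm there decomposes as $|(\xi - u_x, \eta - u_y)|^2 = |\xi - u_x|^2 + |\eta - u_y|^2$. Minimizing over $(u_x,u_y)\in T_{\bar\O}(x)\times T_{\bar\O}(y)$ therefore reduces to minimizing each term separately, giving the coordinate-wise projection. I do not anticipate any real obstacle; the only subtlety is to route the argument through Lemmas \ref{productdomain} and \ref{convexfacts} so as to avoid wrestling with the liminf definition of the tangent cone directly, which would require some extra care to produce a common sequence $h_n \to 0$ along which both distance quotients vanish.
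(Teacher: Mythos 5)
Your proof is correct, but it takes a genuinely different route from the paper's for the tangent-cone identity. The paper works \emph{primally}: it invokes Lemma \ref{convexfacts} Part 3 to upgrade the $\liminf$ in the definition of $T_{\bar\O}$ to a full $\lim$ (the property of prox-regular sets that makes the contingent cone agree with the Clarke cone), and then reads the product decomposition directly off the Pythagorean identity $d^2_{\bar\O\times\bar\O}\bigl((x,y)+h(v,w)\bigr)=d^2_{\bar\O}(x+hv)+d^2_{\bar\O}(y+hw)$. You instead work \emph{dually}: you factor the proximal normal cone as $N^p_{\bar\O\times\bar\O}(x,y)=N^p_{\bar\O}(x)\times N^p_{\bar\O}(y)$ straight from the definition (which has no $\liminf$ subtlety at all), and then pass to tangent cones through polarity and the identity $(A\times B)^*=A^*\times B^*$. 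Both routes need Lemma \ref{productdomain} (admissibility of $\O\times\O$) and Lemma \ref{convexfacts}; yours additionally uses the bipolar theorem $T^{**}=T$ in the step $T_{\bar\O\times\bar\O}(x,y)=N^p_{\bar\O\times\bar\O}(x,y)^*$, which is legitimate because Part 3 gives $T$ closed and convex, but you should state it since Part 4 as written only gives $N^p=T^*$, not $T=N^{p*}$. The trade-off is this: the paper's proof leans on the nontrivial ``$\liminf$ becomes $\lim$'' property of prox-regular sets but is then a one-line computation with the distance function, while your proof avoids that subtlety entirely (which you correctly anticipated as the danger point) at the modest cost of a detour through normal cones and polars. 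Your treatment of the projection formula is essentially the same as what the paper has in mind when it says ``a similar argument.''
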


\begin{proof}
When $D\subset\R^d$ is admissible, it follows from Part 3. of Lemma \ref{convexfacts} that
$$
T_{\bar{D}}(z)=\{v\in\R^d:\lim_{h\searrow 0}\frac{d_{\bar{D}}(z+hv)}{h}=0\}
$$
(i.e. $\lim$ replaces $\liminf$). Since $\O$, and by Lemma \ref{productdomain}, $\O\times\O$, are bounded and admissible, the first statement then follows immediately from the relation
$$
d^2_{\bar{\O}\times\bar{\O}}\left(\left( \begin{array}{c}
x\\
y \end{array} \right)+h\left( \begin{array}{c}
v\\
w \end{array} \right) \right)=d^2_{\bar{\O}}(x+hv)+d^2_{\bar{\O}}(y+hw).
$$
The second statement then follows from the first by a similar argument.
\end{proof}

\subsection{Synchronously Coupled Reflected Brownian Motion}

We now discuss synchronously coupled reflected Brownian motion. A
$d$-dimen\-sional synchronously coupled reflected Brownian motion is a
$2d$-dimensional process $Z_t=(X_t,Y_t)$ in a product domain
$\bar{\O}\times\bar{\O}$ which satisfies the reflected SDE
$$
dZ_t=\sigma(Z_t)dW_t+dL_t,
$$
where
$$
\sigma(z)\equiv \left( \begin{array}{c}
I\\
I \end{array} \right).
$$
Note that, because $\sigma$ is constant, there is no difference between the
Stratonovich and It\^o versions of the above SDE.  We will express this
reflected SDE in a more convenient form as the pair of reflected SDEs 
$$
dX_t=dW_t+dL_t, \quad X_0=x_0\quad\text{and}\quad dY_t=dW_t+dM_t, \quad Y_0=y_0.
$$

We think of $X_t$ and $Y_t$ as being two $d$-dimensional processes which
are driven by the same Brownian motion $W_t$ and which are constrained to
lie in the same domain $\bar{\O}$. The two processes move in sync except
for when one or the other is bumps against the boundary and gets nudged.

We now consider the geometric properties of synchronously coupled reflected
Brownian motion in two domains. Such properties were used to prove the
``hot spots conjecture'' for these domains (See~\cite{HotSpots} and
~\cite{HotSpotsLip} for more details).

\begin{example} \label{triangleexample} Let $\O\subset \R^2$ be the obtuse
  triangle lying with its longest face on the horizontal axis, and denote
  its left and right acute angles by $\alpha$ and $\beta$. Suppose $x_0\neq
  y_0$, and for $x\neq y$, let $\angle(x,y)=\arg(y-x)$. Then, $\P$-almost
  surely,
\begin{equation} \label{angledance} -\beta\leq
    \angle(x_0,y_0)\leq\alpha\implies \text{ for all $t$ either }
    -\beta\leq \angle(X_t,Y_t)\leq\alpha \text{ or } X_t=Y_t.
\end{equation} \end{example}

\begin{proof} By (\ref{limitargument}), it suffices to show that
  (\ref{angledance}) holds $\P^N$-a.s. Fix $N$ and $W_t\in \Omega$. In view
  of Theorem \ref{RepresentationTheorem} and Lemma \ref{productlemma} it
  will suffice to show that $X^N_t$ and $Y^N_t$ satisfy (\ref{angledance})
  where $X^N_t$ and $Y^N_t$ satisfy the ODE \begin{equation}
    \label{approxprojection} \begin{split} \dot{X}^N_t=&\text{{\rm
          proj}}_{T_{\bar{\O}}(X^N_t)}(\dot{W}^N_t),\quad
      X^N_0=x_0\\ \dot{Y}^N_t=&\text{{\rm
          proj}}_{T_{\bar{\O}}(Y^N_t)}(\dot{W}^N_t),\quad Y^N_0=y_0
  \end{split} \end{equation} It is straightforward to check that the
  functions $X^N_t$, $Y^N_t$ starting at $X^N_0=x_0$, $Y^N_0=y_0$ and
  defined inductively for $t\in[m2^{-N},(m+1)2^{-N}]$ by $X^N_t=\text{{\rm
      proj}}_{\bar{\O}}(X^N_{m2^{-N}}+(t-m2^{-N})\dot{W}^N_t)$ and
  $Y^N_t=\text{{\rm
      proj}}_{\bar{\O}}(Y^N_{m2^{-N}}+(t-m2^{-N})\dot{W}^N_t)$ satisfy
  (\ref{approxprojection}). A simple geometric argument shows that if
  $\angle (x,y)\in[-\beta,\alpha]$ then $\angle (\text{{\rm
      proj}}_{\bar{\O}}(x),\text{{\rm
      proj}}_{\bar{\O}}(y))\in[-\beta,\alpha]$ or $\text{{\rm
      proj}}_{\bar{\O}}(x)=\text{{\rm proj}}_{\bar{\O}}(y)$. From this it
  follows by induction that $X^N_t$ and $Y^N_t$ satisfy (\ref{angledance})
  as desired.
Figure \ref{Applications3} shows a pair of sample paths $X^N_t$ and $Y^N_t$
in the interval $m2^{-N}\leq t\leq (m+1)2^{-N}$ where we use $v$ to denote
the constant vector $2^N(W_{(m+1)2^{-N}}-W_{m2^{-N}})$.
\end{proof}
%%%%%%%%%%%%%%%%%%%%%%%%%%%%%%%%%%%%%%%%%%%%%%%%%%%%%%%%
\begin{figure}[!htbp] \centering \includegraphics{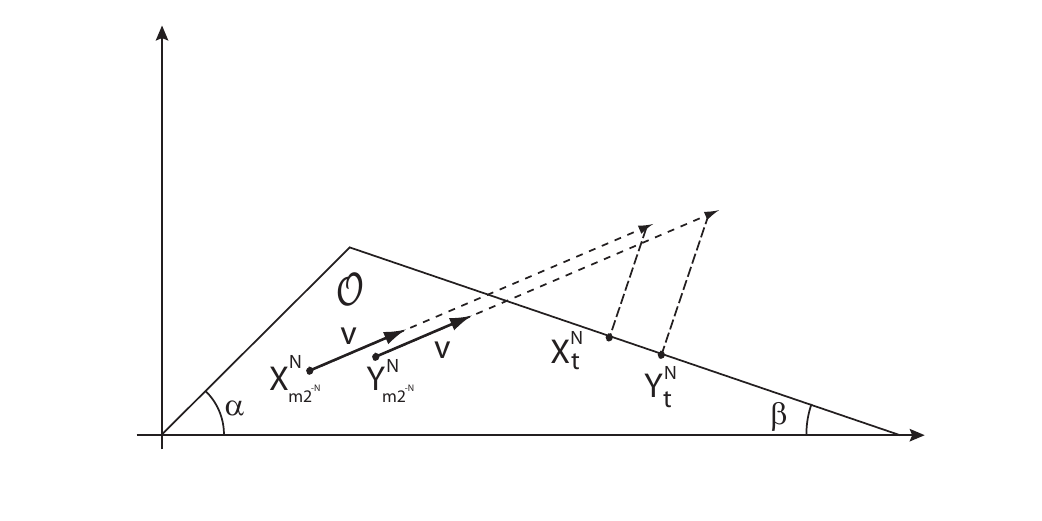}
  \caption{\label{Applications3}} \end{figure}
%%%%%%%%%%%%%%%%%%%%%%%%%%%%%%%%%%%%%%%%%%%%%%%%%%%%%%% \end{proof}

\begin{example} \label{lipexample} (Proposition 2 in ~\cite{HotSpotsLip})We
  now consider synchronously coupled reflected Brownian motion in a Lip
  domain. A lip domain is a domain in $\R^2$ which is bounded
  below by a function $f_1(x)$ and above by another function $f_2(x)$ each
  of which is Lipschitz continuous with constant bounded by $1$. The
  domains are so named because they look like a pair of lips (See Figure
  \ref{Applications5}).

%%%%%%%%%%%%%%%%%%%%%%%%%%%%%%%%%%%%%%%%%%%%%%%%%%%%%%%%
  \begin{figure}[!htbp] \centering \includegraphics{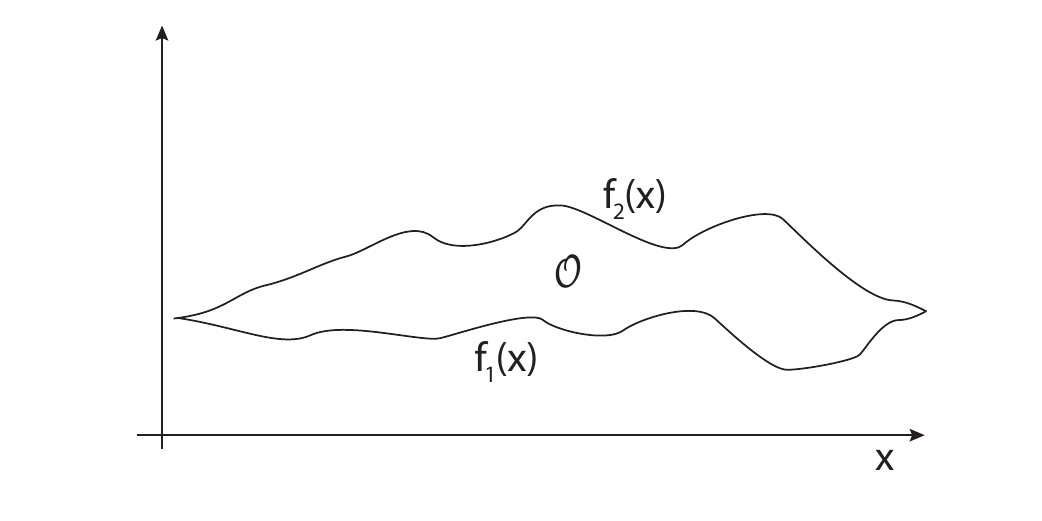}
    \caption{\label{Applications5}} \end{figure}
  %%%%%%%%%%%%%%%%%%%%%%%%%%%%%%%%%%%%%%%%%%%%%%%%%%%%%%%

Consider synchronously coupled reflected Brownian motion in a lip domain
$\O$ where the defining functions $f_1(x)$ and $f_2(x)$ are smooth and have
Lipschitz constants bounded by $\lambda<1$. Then $\O$ is a bounded
admissible domain. Recall the definition of $\angle(x,y)$ from the previous
example, and let $x_0,y_0\in\R^2$ be such that $x_0\neq y_0$ and
$\angle(x_0,y_0)\in[-\frac{\pi}{4},\frac{\pi}{4}]$. We have the following
geometric property for the paths $X_t$ and $Y_t$: \begin{equation}
  \label{angledance2} \forall t,\text{ either }
  \angle(X_t,Y_t)\in[-\frac{\pi}{4},\frac{\pi}{4}] \text{ or } X_t=Y_t\quad
  \P\text{-a.s.}  \end{equation} \end{example}

\begin{proof} \label{ProofLip} In view of (\ref{limitargument}) and Theorem
  \ref{RepresentationTheorem} it suffices to show that for every
  $W_t\in\Omega$, $X^N_t$ and $Y^N_t$ satisfy (\ref{angledance2}) when
  $X^N_t$ and $Y^N_t$ solve the ODE (\ref{approxprojection}). Let
  $\Theta^N_t\equiv\angle(X^N_t,Y^N_t)$ or $0$ according to whether
  $X^N_t\neq Y^N_t$ or $X^N_t=Y^N_t$. 
It is enough to show that, $dt$-almost everywhere,
  $\dot{\Theta}^N_t\leq 0$ when
  $\Theta^N_t\in[\frac{\pi}{4},\frac{\pi}{2}-\tan^{-1}(\lambda)]$ and
  $\dot{\Theta}^N_t\geq 0$ when
  $-\Theta^N_t\in[\frac{\pi}{4},\frac{\pi}{2}-\tan^{-1}(\lambda)]$. By
  symmetry it will suffice to prove the first statement.

Let $v_t=\dot{W}^N_t$, $\tilde{v}_t=\text{{\rm
    proj}}_{T_{\O}(X^N_t)}(v_t)$, and $\tilde{v}'_t=\text{{\rm
    proj}}_{T_{\O}(Y^N_t)}(v_t)$. We compute:
\begin{align*}
  \frac{d}{dt}\left[\Theta^N_t\right]=&\frac{d}{dt}\tan^{-1}
\left(\frac{(Y^N_t-X^N_t)_2}{(Y^N_t-X^N_t)_1}\right)= 
  \frac{(Y^N_t-X^N_t)\cdot
    R(\tilde{v}_t-\tilde{v}'_t)}{|Y^N_t-X^N_t|^2}\\ =&
  \frac{(Y^N_t-X^N_t)\cdot
    R(\tilde{v}_t-v_t)}{|Y^N_t-X^N_t|^2}+\frac{(Y^N_t-X^N_t)\cdot
    R(v_t-\tilde{v}'_t)}{|Y^N_t-X^N_t|^2}\\ \end{align*} where $R=\left(
\begin{array}{cc} 0 & -1\\ 1 & 0 \end{array} \right)$ is the matrix which
rotates vectors in $\R^2$ by $90^\circ$ counter-clockwise. Suppose
$\Theta^N_t\in [\frac{\pi}{4},\frac{\pi}{2}-\tan^{-1}(\lambda)]$. Then
since the Lipschitz constants of $f_1$ and $f_2$ are strictly less than
$1$, $X^N_t$ cannot be on the $f_2$-boundary and $Y^N_t$ cannot be on the
$f_1$-boundary. For each $t$, it follows that either $v_t=\tilde{v}_t$ or
$\arg(R(\tilde{v}_t-v_t))\in[\pi-\tan^{-1}(\lambda),\pi+\tan^{-1}(\lambda)]$
and either $v_t=\tilde{v}'_t$ or
$\arg(R(v_t-\tilde{v}'_t))\in[\pi-\tan^{-1}(\lambda),\pi+\tan^{-1}(\lambda)]$. And
so each of the terms in the sum above is $\leq 0$.
We depict in Figure \ref{Applications6} the case where $X^N_t\in\O$ and
$Y^N_t\in\partial\O$.

%%%%%%%%%%%%%%%%%%%%%%%%%%%%%%%%%%%%%%%%%%%%%%%%%%%%%%%%
\begin{figure}[!htbp] \centering \includegraphics{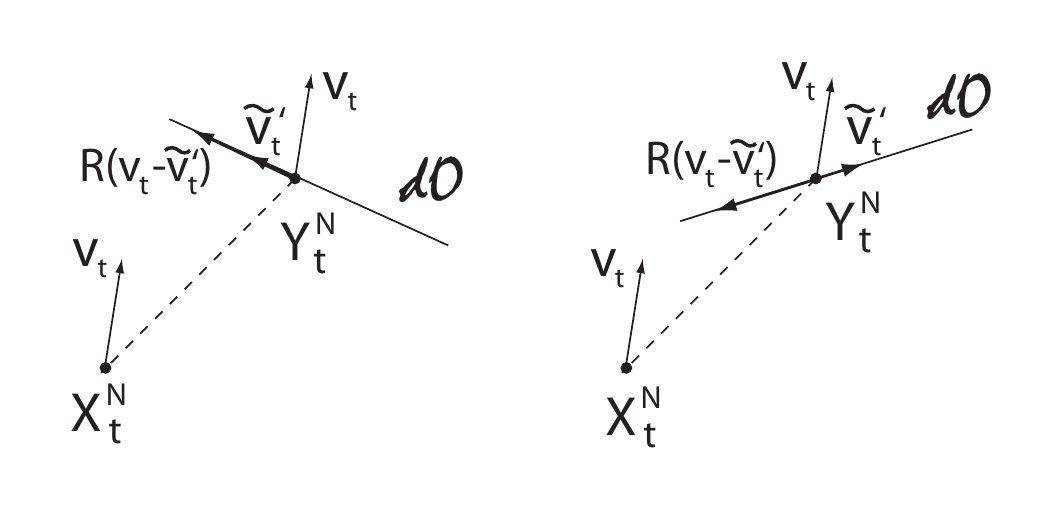}
  \caption{\label{Applications6}} \end{figure}
%%%%%%%%%%%%%%%%%%%%%%%%%%%%%%%%%%%%%%%%%%%%%%%%%%%%%%%

\end{proof}

\subsection{Mirror Coupled Reflected Brownian Motion}

Our final example involves mirror coupled reflected Brownian motion. A
$d$-dimensional mirror coupled reflected Brownian motion is a
$2d$-dimensional process $Z_t=(X_t,Y_t)$ in a product domain
$\bar{\O}\times\bar{\O}$ which satisfies the reflected SDE
\begin{equation}
  \label{mirrorRSDE} dZ_t=\sigma(Z_t)dW_t+dL_t, \end{equation} where $$
\sigma(z)=\sigma(x,y)\equiv \left( \begin{array}{c}
  I\\ I-2\frac{(y-x)(y-x)^\top}{|y-x|^2} \end{array} \right), $$
defined up
until the first time $\tau$ that $Z_t$ hits the diagonal of
$\bar{\O}\times\bar{\O}$, at which point we stop our process
(i.e. $Z_t\equiv Z_\tau$ for $t\geq \tau$).  We will express this reflected SDE
in a more convenient form as the pair of reflected SDEs
\begin{equation}
  \label{mirror2} \begin{split} dX_t=&dW_t+dL_t,\quad 
    X_0=x_0\\ dY_t=&(I-2\frac{(Y_t-X_t)(Y_t-X_t)^\top}{|Y_t-X_t|^2})dW_t+dM_t,\quad 
    Y_0=y_0.  \end{split} \end{equation}
We think of $X_t$ and $Y_t$ as
being two $d$-dimensional processes which are ``mirror coupled'' with
respect to the driving Brownian motion $W_t$ and which are constrained to
lie in the same domain $\bar{\O}$. That is, if you consider the hyperplane
which perpendicularly bisects the line segment connecting $X_t$ and $Y_t$
to be a ``mirror'', then the two processes move in such a way that they are
mirror images of each other until either process bumps into the boundary
and is nudged (which causes the mirror to shift). We refer the reader to
the papers~\cite{HotSpots} and~\cite{HotSpotsLip} for a more thorough
overview.

We will prove the same geometric property we considered for synchronously
coupled reflected Brownian motion in Example \ref{lipexample}, but now for
mirror coupled reflected Brownian motion. The point is that
(\ref{mirrorRSDE}) can be viewed as a Stratonovich reflected SDE and so
again it suffices to prove the geometric property for the approximating
processes.

We make this rigorous with the following lemma which shows that, off of the
diagonal of $\O\times\O$, the Stratonovich correction factor for
(\ref{mirrorRSDE}) is $0$.  \begin{lem} \label{lem:ItoisStrat} For
  $t<\tau$, \begin{equation} \label{ItoisStrat} \sum_{j=1}^d \frac{1}{2}d\left<
\left(I-2\frac{(Y_t-X_t)(Y_t-X_t)^\top}{|Y_t-X_t|^2}\right)_{ij},(W_t)_j \right>=0
  \end{equation}
In fact,
\begin{equation} \label{ItoisStrat2} d\left<\left(
I-2\frac{(Y_t-X_t)(Y_t-X_t)^\top}{|Y_t-X_t|^2}\right)_{ij},(W_t)_j
    \right>=0,\text{ for each } j \end{equation} \end{lem}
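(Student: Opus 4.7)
The plan is to compute the martingale part of the difference $Z_t \equiv Y_t - X_t$, then apply It\^o's formula, and finally invoke a homogeneity argument to make the relevant quadratic covariation vanish.

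First, I would set $Z_t \equiv Y_t - X_t$; by the definition of $\tau$, $|Z_t| > 0$ throughout $[0, \tau)$. Writing $P(z) \equiv zz^\top / |z|^2$ for the orthogonal projection onto the line $\mathrm{span}(z)$, and subtracting the two equations in (\ref{mirror2}), I find
$$dZ_t = -2\, P(Z_t)\, dW_t + dM_t - dL_t.$$
Because $L_\cdot$ and $M_\cdot$ have locally bounded variation, they do not contribute to any cross variation with $W_\cdot$, so the martingale part of $dZ^{(k)}_t$ is simply $-2 \sum_m P_{km}(Z_t)\, dW^{(m)}_t$, and only this piece enters the quadratic covariation computation.

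The crucial observation is one about homogeneity. Writing $F_{ij}(z) \equiv (I - 2P(z))_{ij} = \delta_{ij} - 2 z_i z_j / |z|^2$, one has $F_{ij}(\lambda z) = F_{ij}(z)$ for every $\lambda > 0$; that is, $F_{ij}$ is homogeneous of degree zero on $\R^d \setminus \{0\}$. Euler's identity then gives
$$\sum_k z_k\, \partial_k F_{ij}(z) = 0 \quad \text{for all } z \neq 0.$$

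With these ingredients in hand, the proof will be short. I would apply It\^o's formula to $F_{ij}(Z_t)$, which is smooth on $\R^d \setminus \{0\}$, working on stopping-time intervals $[0, \tau \wedge T]$ so that $Z_t$ stays away from the origin. Using the martingale part of $dZ_t$ identified above, the cross variation with $W^{(j)}_t$ collapses to
$$d\left\langle F_{ij}(Z_t),\, W^{(j)}_t\right\rangle \;=\; -2 \sum_k \partial_k F_{ij}(Z_t)\, P_{kj}(Z_t)\, dt \;=\; -\frac{2 Z^{(j)}_t}{|Z_t|^2}\left(\sum_k Z^{(k)}_t\, \partial_k F_{ij}(Z_t)\right) dt,$$
which vanishes identically by the Euler relation. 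This gives (\ref{ItoisStrat2}), and (\ref{ItoisStrat}) then follows by summing on $j$. There is no real obstacle here; the only (minor) technical point is that $F_{ij}$ is singular at the origin, which is handled by the stopping time $\tau$, and the rest is bookkeeping with matrix indices.
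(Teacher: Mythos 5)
Your proof is correct and reaches the same vanishing covariation as the paper, but by a genuinely different (and cleaner) mechanism. The paper also writes $V=Y-X$, derives $d\langle V_\ell,W_j\rangle=\tfrac{-2V_jV_\ell}{\sum_kV_k^2}\,dt$, and then explicitly computes all the partial derivatives $\partial_{V_\ell}\bigl(V_iV_j/\sum_kV_k^2\bigr)$ and grinds out the algebra, separating the index cases $\ell=i$, $\ell=j$, and $\ell\neq i,j$ (implicitly assuming $i\neq j$) until the terms cancel. You skip the case split entirely by observing that the coefficient matrix $F_{ij}(z)=\delta_{ij}-2z_iz_j/|z|^2$ is homogeneous of degree zero on $\R^d\setminus\{0\}$, so Euler's identity gives $\sum_kz_k\partial_kF_{ij}(z)=0$; since the quadratic covariation $d\langle F_{ij}(Z_t),W_j\rangle$ factors as $-\tfrac{2Z_j}{|Z|^2}\bigl(\sum_kZ_k\,\partial_kF_{ij}(Z_t)\bigr)\,dt$, the Euler relation kills it immediately. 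What your route buys is both economy and insight: the cancellation in the paper's algebra is seen to be forced by the scale invariance of $\sigma$, and the argument handles $i=j$ and $i\neq j$ uniformly. What the paper's brute-force route buys is self-containment (no appeal to Euler's identity) and a concrete check that readers can verify line by line. Both are correct; yours explains the ``why.''
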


\begin{proof} It suffices to prove (\ref{ItoisStrat2}). Let $V_i
  \equiv (Y_t-X_t)_i$, where we have suppressed the dependence of $V_i$ on
  $t$. An easy calculation shows that $$ d\langle
  V_\ell,(W_t)_j\rangle=\frac{-2V_jV_\ell}{\sum_k V_k^2}dt $$ and
  \begin{align*} \frac{\partial}{\partial V_i}\left(\frac{V_iV_j}{\sum_k
      V_k^2}\right)=&\frac{V_j(\sum_k V_k^2)-2V_i^2V_j}{(\sum_k
      V_k^2)^2}\\ \frac{\partial}{\partial V_j}\left(\frac{V_iV_j}{\sum_k
      V_k^2}\right)=&\frac{V_i(\sum_k V_k^2)-2V_iV_j^2}{(\sum_k
      V_k^2)^2}\\ \frac{\partial}{\partial
      V_\ell}\left(\frac{V_iV_j}{\sum_k
      V_k^2}\right)=&\frac{-2V_iV_jV_\ell}{(\sum_k V_k^2)^2},\text{ for
    }\ell\neq i,j\\ \end{align*} Putting these together, we have that
  \begin{align*} d\langle \frac{V_iV_j}{\sum_k V_k^2},(W_t)_j \rangle
    =&\left(\frac{V_j(\sum_k V_k^2)-2V_i^2V_j}{(\sum_k V_k^2)^2}
    \right)\left(\frac{-2V_jV_i}{\sum_k
      V_k^2}\right)\\ &+\left(\frac{V_i(\sum_k V_k^2)-2V_iV_j^2}{(\sum_k
      V_k^2)^2} \right)\left(\frac{-2V_j^2}{\sum_k
      V_k^2}\right)\\ &+\sum_{\ell\neq
      i,j}\left(\frac{-2V_iV_jV_\ell}{(\sum_k V_k^2)^2}
    \right)\left(\frac{-2V_jV_\ell}{\sum_k V_k^2}\right)= 0
  \end{align*} From this, (\ref{ItoisStrat2}) immediately follows.

\end{proof}

We now prove a geometric property.

\begin{example} (Example \ref{lipexample} for mirror coupling) Let $\O$ be
  the same lip domain defined by smooth functions considered in Example
  \ref{lipexample} and consider the mirror coupled reflected Brownian
  motion starting from $x_0$ and $y_0$ where $x_0\neq y_0$. Then
  (\ref{angledance2}) holds where where $X_t$ and $Y_t$ are given by
  (\ref{mirror2}).  \end{example}

\begin{proof} Let $D_\epsilon
  =\{z=(x,y)\in\bar{\O}\times\bar{\O}:|x-y|<\epsilon\}$ be the
  ``$\epsilon$-diagonal'' of $\bar{\O}\times\bar{\O}$. Consider a sequence
  of smooth functions $\rho_k:\bar{\O}\times\bar{\O}\rightarrow [0,1]$ such
  that $\rho(z)\equiv 0$ on $D_{\frac{1}{2k}}$ and $\rho(z)\equiv 1$ off of
  $D_{\frac{1}{k}}$. Let $\sigma_k(z)=\rho_k(z)\sigma(z)$. Then
  $\sigma_k\in C^2(\bar{\O}\times\bar{\O})$.

Let $\P^k$ be the measure on $Z$-pathspace induced by the solutions to the
reflected SDE $$ dZ^k_t=\sigma_k(Z^k_t)\circ dW_t+dL^k_t $$ and define
$\P^{k,N}$ to be the measures on $Z$-pathspace induced by solutions to the
approximating reflected ODE \begin{equation} \label{Nkapprox}
  dZ^{k,N}_t=\sigma_k(Z^{k,N}_t)dW^N_t+dL^{k,N}_t \end{equation} Recall
that the stopping time $\tau$ corresponds to the first time $X_t$ equals
$Y_t$ and define $\tau_k \equiv  \inf \{t:|X_t-Y_t|<\frac{1}{k}\}$. Let
$S=\{Z_t\in C([0,\infty);\R^{2d}):
  -\frac{\pi}{4}\leq\angle(X_t,Y_t)\leq\frac{\pi}{4},\forall t<\tau\}$ and
  let $S_k=\{Z_t\in C([0,\infty);\R^{2d}):
    -\frac{\pi}{4}\leq\angle(X_t,Y_t)\leq\frac{\pi}{4},\forall t<\tau_k\}$.

Our goal is to show that $\P(S)=1$, where $\P$ is the measure induced on
$Z$-pathspace by (\ref{mirrorRSDE}).  It is clear that the subsets $S_k$
decrease monotonically to $S$, and so it suffices to prove that
$\P(S_k)=1,\forall k$.

We first claim that $\P(S_k)=\P^k(S_k)$. This is true because $S_k$ is
$\F_{\tau_k}$-measurable, and, in view of Lemma \ref{lem:ItoisStrat}
and the equality $\sigma=\sigma_k$ on $D_{\frac{1}{k}}$, it is clear that
$\P(A)=\P^k(A)$ for $A\in\F_{\tau_k}$. 
So we need only show that $\P^k(S_k)=1$, and for this it will suffice to
show that $\P^{k,N}(S_k)=1$. We argue this as we did in Example
\ref{lipexample}. 

Fix $N$ and $W_t\in\Omega$ and let $\Theta^{k,N}_t\equiv
\angle(X^{k,N}_t,Y^{k,N}_t)$. By symmetry, it is enough to show that
$\dot{\Theta}^{k,N}_t\leq 0$ for $\Theta^{k,N}_t\in
      [\frac{\pi}{4},\frac{\pi}{2}-\tan^{-1}(\lambda)]$ for almost every
      $t<\tau_k$. Let $v_t=\dot{W}^N_t$ and $$
      w_t=\left(I-\frac{(Y^{k,N}_t-X^{k,N}_t)(Y^{k,N}_t-X^{k,N}_t)^\top}
{|Y^{k,N}_t-X^{k,N}_t|^2}\right)v_t, 
      $$ Then, in view of Theorem \ref{RepresentationTheorem} and Lemma
      \ref{productlemma}, $\dot{X}^{k,N}_t=\tilde{v}_t\equiv\text{{\rm
          proj}}_{T_{\bar{\O}}(X^N_t)}(v_t)$ and
      $\dot{Y}^{k,N}_t=\tilde{w}_t\equiv \text{{\rm
          proj}}_{T_{\bar{\O}}(Y^N_t)}(w_t)$ (recall that
      $\rho_k(X^{k,N}_t,Y^{k,N}_t)=1$ for $t<\tau_k$).

We compute:
\begin{align*}
\dot{\Theta}^{k,N}_t=& \frac{(Y^{N,k}_t-X^{N,k}_t)\cdot
  R(\tilde{v}_t-\tilde{w}_t)}{|Y^{N,k}_t-X^{N,k}_t|^2}\\ =&
\frac{(Y^{N,k}_t-X^{N,k}_t)\cdot
  R(\tilde{v}_t-v_t)}{|Y^{N,k}_t-X^{N,k}_t|^2}+\frac{(Y^{N,k}_t-X^{N,k}_t)\cdot
  R(v_t-w_t)}{|Y^{N,k}_t-X^{N,k}_t|^2}\\ 
&+\frac{(Y^{N,k}_t-X^{N,k}_t)\cdot R(w_t-\tilde{w}_t)}{|Y^{N,k}_t-X^{N,k}_t|^2}
\end{align*}
The argument in the Proof \ref{ProofLip} again shows that the first and
third terms are non-positive. That the second term is non-positive follows
from the fact that either $v_t=w_t$ or $\arg(v_t-w_t)=\pm\Theta^{k,N}_t$.
\end{proof}

%%%%%%%%%%%%%%%%%%%%%%%%%%%%%%%%%%%%%%%%%%%%%%%%%%%%%%%%%%%%%%%%%%%%%%%%%%%%
\bibliography{2ndResult}{}
\bibliographystyle{plain}

\end{document}